\numberwithin{equation}{section}
\theoremstyle{plain}
\newtheorem{theorem}[subsection]{Theorem}
\newtheorem{lemma}[subsection]{Lemma}
\newtheorem{corollary}[subsection]{Corollary}
\newtheorem{remark}[subsection]{Remark}
\newtheorem{definition}[subsection]{Definition}
\newtheorem{conjecture}[subsection]{Conjecture}
\newtheorem{claim}[subsection]{Claim}
\newtheorem{question}[subsection]{Question}
\newsavebox{\proofbox}
\savebox{\proofbox}{\begin{picture}(7,7)  \put(0,0){\framebox(7,7){}}\end{picture}}
\newcommand\E{\mathcal{E}}
\newcommand\U{\mathcal{U}}
\newcommand\K{\mathcal{K}}
\renewcommand\H{\mathcal{H}}
\newcommand\R{\mathcal{R}}
\newcommand\tr{\operatorname{Tr}}
\newcommand\F{\mathbb{F}}
\newcommand\C{\mathcal{C}}
\newcommand\GL{\operatorname{GL}}
\newcommand\N{\mathbb{N}}
\newcommand\G{\mathfrak{G}}
\newcommand\s{\mathfrak{s}}
\newcommand\f{\overline{f}}
\newcommand\p{\mathfrak{p}}
\newcommand\Aut{\operatorname{Aut}}
\newcommand\I{\operatorname{I}}
\newcommand\Sz{\operatorname{Sz}}
\newcommand\Gal{\operatorname{Gal}}
\newcommand\Syl{\operatorname{Syl}}
\newcommand\Per{\operatorname{Per}}
\newcommand\Hom{\operatorname{Hom}}
\renewcommand{\leq}{\leqslant}
\renewcommand{\geq}{\geqslant}
\newlength\sumd
\newtheoremstyle{named}{}{}{\itshape}{}{\bfseries}{.}{.5em}{\thmnote{#3's }#1}
\theoremstyle{named}
\begin{document}
\title[On the subgroup permutability degree of $\Sz(q)$]{On the subgroup permutability degree of the simple Suzuki groups}
\author{Stefanos Aivazidis}
\date{\today}
\thanks{The author acknowledges financial support from the N. D. Chrysovergis endowment under the auspices of the National Scholarships Foundation of Greece.}
\address{School of Mathematical Sciences, Queen Mary, University of London, London E1 4NS}
\email{s.aivazidis@qmul.ac.uk}
\maketitle
\begin{abstract}
We prove that the subgroup permutability degree of the 
simple Suzuki groups vanishes asymptotically. In the course 
of the proof we establish that the limit of the probability of a
subgroup of $\Sz(q)$ being a 2-group is equal to 1.
\end{abstract}
\section{Introduction}
Consider a finite group $G$ and subgroups $H$, $K$ of $G$. We say that $H$ and $K$ permute if $HK=KH$, and call $H$ a permutable (or quasi-normal) subgroup if $H$ permutes with every subgroup of $G$. A group $G$ is called quasi-Dedekind if all subgroups of $G$ are permutable. Recently T{\u{a}}rn{\u{a}}uceanu~\cite{tuarnuauceanu09} introduced the concept of subgroup permutability degree as the probability that two subgroups of 
$G$ permute
\begin{equation*} 
\p(G)	\coloneqq		\frac{\left| \left\{(H,K) \in \s(G) \times \s(G):HK=KH\right\}\right|}{\left|\s(G)\right|^2} 	= 		
\frac{1}{{\left|\s(G)\right|^2}}{\sum\limits_{H \leq G}{\left|\Per(H)\right|}}\label{initial},
\end{equation*}
where $ \Per ( H ) \coloneqq \left\{ K \leq G : HK = KH \right\} $, and $\s(G)$ is the set of subgroups of $G$. 
Thus $\p$ provides us with an arithmetic measure of how close $G$ is to being quasi-Dedekind.
This, we recall, is a property that lies strictly between the property of being abelian and that of being nilpotent, i.e.,
\begin{equation*} 
\text{abelian} \subsetneq \text{quasi-Dedekind} \subsetneq \text{nilpotent}.
\end{equation*}
Clearly an abelian group is quasi-Dedekind since all subgroups are normal,
thus permutable. The second containment follows from a celebrated 
result of Ore that permutable subgroups of finite groups are subnormal---in 
particular, a maximal subgroup of a finite quasi-Dedekind group 
is normal in the said group. In fact a finite group $G$ is quasi-Dedekind if and only 
if $G$ is a nilpotent modular group [Theorem 5.1.1,\cite{schmidt}]. 
We remind the reader that a group $G$ is called modular if its subgroup lattice 
is modular, that is, if $\langle H, K \cap L \rangle = \langle H, K \rangle \cap L$ 
for all subgroups $H,K,L$ of $G$ such that $H \leq L$. Thus one has the containments
\begin{equation*} 
\text{abelian} \subsetneq \text{quasi-Dedekind $\leftrightarrow$ nilpotent modular} \subsetneq \text{nilpotent}.
\end{equation*}
It therefore seems natural to speculate that simple groups are quite far from being quasi-Dedekind.
The main result of the present paper serves as a testament to this intuition by focussing on
the family of simple Suzuki groups. Indeed, we shall prove the following theorem.
\begin{theorem}\label{main}
The subgroup permutability degree of \hspace{0,01 mm} $\Sz\left(2^{2n+1}\right)$ vanishes asymptotically, i.e., 
\begin{equation*}
\lim\limits_{n\to +\infty}\mathfrak{p}\left(\Sz \left(2^{2n+1}\right)\right)=0.
\end{equation*}
\end{theorem}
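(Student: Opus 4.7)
My plan is to exploit two structural features of $\Sz(q)$: Suzuki's explicit classification of its subgroup lattice, and the well-known fact that distinct Sylow $2$-subgroups of $\Sz(q)$ form a trivial-intersection (TI) system. Write $\s_{2}$ for the set of $2$-subgroups of $\Sz(q)$ and $\s^{*} \coloneqq \s(\Sz(q)) \setminus \s_{2}$. Because each non-trivial $2$-subgroup lies in a unique Sylow $2$-subgroup, and $\Sz(q)$ has exactly $q^{2}+1$ Sylow $2$-subgroups, one obtains
\[
|\s_{2}| = 1 + (q^{2}+1)\bigl(s(P) - 1\bigr),
\]
where $s(P)$ denotes the number of subgroups of any fixed Sylow $2$-subgroup $P$; since the centre of $P$ is elementary abelian of order $q$, trivially $s(P) \geq q$. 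For the complementary set I would run through Suzuki's list of maximal subgroups---the Borel subgroup, the normalisers of the three cyclic Hall tori of orders $q-1$ and $q \pm \sqrt{2q} + 1$, and, when $q$ is a proper power of its prime subfield, the subfield subgroups $\Sz(q_{0})$---and bound the number of non-$2$ subgroups contained in each conjugate by a polynomial in $q$. Summing (with overcounting) over all conjugacy classes and all conjugates should yield $|\s^{*}| = o(|\s_{2}|)$, which is exactly the auxiliary assertion of the abstract that the probability of a subgroup of $\Sz(q)$ being a $2$-group tends to $1$.

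The key observation that drives the main estimate is the following: if $H, K \in \s_{2}$ are both non-trivial and $HK = KH$, then $HK$ is again a $2$-group, hence it is contained in some Sylow $2$-subgroup, and so by TI-ness both $H$ and $K$ lie in a common Sylow. The number of ordered pairs of non-trivial permuting $2$-subgroups is therefore at most $(q^{2}+1)(s(P)-1)^{2}$, since each Sylow contributes at most $(s(P)-1)^{2}$ such pairs and no pair is counted twice.

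Splitting the numerator of $\p(\Sz(q))$ according to whether each of $H, K$ lies in $\s_{2}$ or $\s^{*}$, applying the previous bound to the $(\s_{2},\s_{2})$ contribution and the trivial estimate $|\Per(H)| \leq |\s(\Sz(q))|$ to the other three pair-types, and then invoking $|\s(\Sz(q))| \geq |\s_{2}| \geq (q^{2}+1)(s(P)-1)$ in the denominator, I expect an inequality of the form
\[
\p(\Sz(q)) \;\leq\; \frac{1}{q^{2}+1} + O\!\left(\frac{|\s^{*}|}{|\s_{2}|}\right) \longrightarrow 0.
\]
The hardest part of the proof will be the polynomial upper bound on $|\s^{*}|$: one must carefully enumerate the non-$2$ subgroups inside every maximal (Borel, torus normaliser, subfield), keep track of overcounting when the same subgroup sits inside several maximals, and absorb the recursive contribution of the subfield Suzuki groups. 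Since each non-Sylow maximal is metacyclic on its $2'$-part (dihedral-like or Frobenius), the individual subgroup counts are controlled by elementary divisor-sum estimates; but orchestrating them to land comfortably below $|\s_{2}|$ is where the bookkeeping is heaviest.
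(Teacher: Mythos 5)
Your top-level reduction coincides with the paper's Main Lemma: split ordered pairs of subgroups according to membership in the set of $2$-subgroups, use the TI property to see that two nontrivial permuting $2$-subgroups lie in a common Sylow $2$-subgroup, bound those pairs by $(q^2+1)\left(\left|\s(P)\right|-1\right)^2$, and reduce the theorem to the facts that $\left|\Syl_2(\Sz(q))\right|\to\infty$ and that almost all subgroups are $2$-groups. The genuine gap is in how you propose to prove the second fact. You claim that the number of non-$2$-subgroups inside each conjugate of a maximal subgroup is bounded by a polynomial in $q$. This is true for the torus normalisers $D_{2(q-1)}$ and $C_{q\pm\theta+1}\rtimes C_4$ (the paper gets bounds of order $q^{3/2}$ from Schur--Zassenhaus plus a divisor estimate), but it is false for the Borel subgroup $\Gamma=P\rtimes C$ whenever $m=\log_2 q$ is composite: a nontrivial cyclic $B=\langle b\rangle\leq C$ whose order divides $2^{m_b}-1$ for a proper divisor $m_b$ of $m$ acts on $Z$ and $P/Z$ as $\F_{2^{m_b}}$-spaces of dimension $m/m_b$, and already the $B$-invariant subgroups of $Z$ give rise to roughly $2^{m^2/(4m_b)}$ distinct subgroups $U\rtimes B^g$ of $\Gamma$ that are not $2$-groups --- superpolynomial in $q$, since infinitely many $m=2n+1$ are divisible by $3$. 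The same problem occurs for the subfield subgroups: $\left|\s(\Sz(q_0))\right|$ is itself of size $2^{\Theta((\log_2 q_0)^2)}$ and $q_0$ may be as large as $q^{1/3}$, so that contribution is also superpolynomial in $q$.

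Consequently $\left|\s^{*}\right|=o(\left|\s_2\right|)$ cannot be reached by polynomial bookkeeping; one must compare two quantities both of the shape $2^{c(\log_2 q)^2}$ and show that the constant governing the non-$2$-subgroups is strictly smaller. This is exactly what the paper does: it counts conjugacy classes of subgroups of $P$ via complements to central sections (Hulpke's method, which here reduces to $\Hom$-spaces because the relevant sections are central), obtaining $\left|\s(\Gamma)\right|-\left|\s(P)\right|\leq 2^{m^2/6+4m+1/2}$, against the lower bound $\left|\s(P)\right|\geq 2^{(m^2-1)/4}$ coming from the subgroups of the elementary abelian centre; since $\tfrac16<\tfrac14$ this yields $\left|\s(\Gamma)\right|\sim\left|\s(P)\right|$. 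The subfield contribution is then absorbed by an induction proving $\left|\s(\Sz(q))\right|<2^{\frac{11}{5}(\log_2 q)^2}$, the constant being calibrated so that $\frac{11}{5}\cdot\frac{1}{9}=\frac{11}{45}<\frac14$. If you replace your polynomial estimates for the Borel and the subfield subgroups by such a two-sided exponent comparison, your argument goes through; as written, the step ``$\left|\s^{*}\right|$ is polynomially bounded, hence negligible'' fails.
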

\noindent The proof of Theorem \ref{main} is carried out in two steps. First we offer a criterion 
for the vanishing of the subgroup permutability degree of an infinite family of groups 
under a set of suitable hypotheses. The second step consists in establishing that 
$\Sz(q)$ satisfies each of these hypotheses. The former is straightforward and it is 
precisely the content of section~\ref{3}. The latter is more involved and it will occupy 
the remainder of the paper, which is organised as follows.
 
In section~\ref{4} we outline the subgroup structure of $\Sz(q)$ 
with particular emphasis on the structure of a Sylow 2-subgroup 
$P$ and that of its normaliser. 
In section~\ref{5} we discuss a method of Hulpke for determining 
the conjugacy classes of subgroups of a soluble group, and apply 
this method to $P$ in order to obtain bounds for $\left| \s( P ) \right|$. 
In subsection \ref{normaliser} we do the same for the normaliser. 
In section~\ref{6} we use standard techniques from calculus to 
compare the number of subgroups of the normaliser with that of $P$, 
and find that these are asymptotically equal. 
Finally, we prove in section~\ref{7} that 2-subgroups dominate the 
subgroup lattice of $\Sz(q)$; this is the only nontrivial condition of our 
criterion in section~\ref{3} that actually requires proof, as will soon 
become apparent to the reader. 
In section~\ref{8} we conclude our exposition with a list of questions 
and problems that offer potential for future research.
\subsection{Notation}
For the convenience of the reader we recall standard 
notation outside the realm of algebra, and explain 
notational conventions on the part of the author
that will be used throughout the paper. 
\begin{enumerate}
\item Let $n \in \mathbb{N}$. Then $d(n)$ is the number of 
divisors of $n$, and $\omega(n)$ is the number of distinct 
prime divisors of $n$. 
\item For the sequences $\left\{f_n\right\}$, $\left\{g_n\right\}$, 
$g_n \neq 0$, we will write  $f_n \sim g_n$ if 
$\lim\limits_{n \rightarrow \infty}f_n/g_n=1$.
\item Suppose that $G$ is a group, and let $x,y,g \in G$. 
We shall write the conjugate of $x$ with respect to $g$ as $gxg^{-1}$, 
and the commutator of $x$, $y$ (in that order) as $xyx^{-1}y^{-1}$. 
\item We say that the $p$-group $P$ is a special $p$-group if 
either it is elementary abelian, or if $P' = Z( P ) = \Phi ( P )$ is 
elementary abelian. For the (not necessarily special) $p$-group 
$P$ we shall write $\mho ( P )$ for the subgroup generated by 
the $p$-powers of elements of $P$.
\item If $G_1, G_2$ are groups, then $\Hom \left( G_1, G_2 \right)$ 
is the set of all homomorphisms $G_1 \to G_2$. 
\item If $V,W$ are vector spaces over the field $\F$, then 
$\mathcal{L} \left(V,W \right) \left( \F \right) = \mathcal{L} \left(V,W \right)$ 
stands for the vector space of all linear transformations $V \to W$. 
\item Let $\F_q$ be the finite field with $q=p^n$ elements, for some 
prime $p$ and some $n \in \N$. We shall write $V(k,q)$ for the vector 
space $\F_q^k$. 
\end{enumerate}
\section{Main Lemma}\label{3}
Let us now focus on the criterion for the vanishing of the subgroup permutability 
degree that we mentioned earlier. In general, working 
with the definition of $\p$ seems difficult---there is usually little or no insight when 
two randomly chosen subgroups of a group permute, perhaps because they may permute 
for a variety of reasons. Even if one were only to consider groups for which subgroup 
permutability is reduced to a more manageable property\footnote{This is for example the case 
with the so-called equilibrated groups of Blackburn et al. \cite{equilibrated}.},
one should still be able to say something useful about the behaviour of
the various sums that would ultimately appear in the resulting
expression for $\p$. 

One should therefore ask if perhaps ``most" subgroups
of the group in question are of a particular type, and if so, whether subgroup 
permutability between those subgroups can be decided effectively. The simplest
case arises when $p$-subgroups dominate the subgroup lattice for some prime 
$p$ dividing the order of the group, and when in addition the Sylow $p$-subgroups 
intersect trivially. In this case it suffices to only check permutability between subgroups 
of the same Sylow $p$-subgroup. The following lemma makes this precise.
\begin{lemma} \label{TI}
Let $\left\{\G_n\right\}_{n=1}^{+\infty}$ be a family of finite groups such that 
$p \mid \left| \G_n\right|$ for some fixed prime $p$ and for all $n \in \N$, 
satisfying the conditions
\begin{enumerate}
\item \label{1}the Sylow $p$-subgroups of $\G_n$ intersect trivially for all $n \in \N$, 
\item \label{2}$\lim\limits_{n \to +\infty}\left| \Syl_p(\G_n) \right|=+\infty$, and
\item \label{3}$\lim\limits_{n \to +\infty}\dfrac{\left| \E_n \right|}{\left|\s(\G_n)\right|}=1$,
\end{enumerate}
where 
\begin{equation*}
\E_n \coloneqq \left\{ H \leq \G_n : \left| H \right|=p^k\,\ \text{for some} \,\ k \in \N \right\}=\bigcup\limits_{P \in \Syl_p(\G_n)}\s( P).
\end{equation*}
Then $\lim\limits_{n \to +\infty}\p(\G_n)=0.$
\end{lemma}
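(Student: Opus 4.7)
The plan is to use the trivial-intersection condition to localise permuting pairs of nontrivial $p$-subgroups inside individual Sylow $p$-subgroups, and then to read off the vanishing of $\p(\G_n)$ from the three hypotheses via an elementary partition of pair counts.

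The first step is the key combinatorial observation: if $H, K$ are nontrivial $p$-subgroups of $\G_n$ with $HK = KH$, then $HK$ is a subgroup whose order is a power of $p$, hence it lies in some $P \in \Syl_p(\G_n)$. Hypothesis (\ref{1}) guarantees that every nontrivial $p$-subgroup is contained in a \emph{unique} Sylow $p$-subgroup, so $H$ and $K$ must sit inside the same $P$. In particular, permuting pairs of nontrivial $p$-subgroups never cross Sylows.

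Next I would partition the ordered pairs $(H,K) \in \s(\G_n)\times\s(\G_n)$ into three families: (a) those with at least one entry outside $\E_n$; (b) those in $\E_n\times\E_n$ with at least one entry equal to the trivial subgroup; (c) those in which both entries are nontrivial members of $\E_n$. Family (a) contributes at most $|\s(\G_n)|^2 - |\E_n|^2$ permuting pairs, family (b) contributes at most $2|\E_n|$ pairs (all of which trivially permute), and by the opening observation together with the conjugacy of Sylows (so $|\s(P)|$ does not depend on the chosen $P$), family (c) contributes at most $|\Syl_p(\G_n)|\cdot|\s(P)|^2$ permuting pairs. Dividing by $|\s(\G_n)|^2$, the contribution of (a) vanishes by hypothesis (\ref{3}); that of (b) vanishes since $|\s(\G_n)|\geq|\Syl_p(\G_n)|\to +\infty$ by hypothesis (\ref{2}). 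For family (c) I would exploit the disjoint-union formula
\begin{equation*}
|\E_n| = 1 + |\Syl_p(\G_n)|\cdot(|\s(P)| - 1),
\end{equation*}
which comes from (\ref{1}), to deduce $|\E_n| \geq \tfrac{1}{2}|\Syl_p(\G_n)|\cdot|\s(P)|$ whenever $|\s(P)|\geq 2$ (automatic since $P$ is a nontrivial $p$-group). Combined with $|\s(\G_n)|\geq|\E_n|$ and once more with (\ref{3}), this yields
\begin{equation*}
\frac{|\Syl_p(\G_n)|\cdot|\s(P)|^2}{|\s(\G_n)|^2} \;\leq\; \frac{4}{|\Syl_p(\G_n)|},
\end{equation*}
which tends to $0$ by hypothesis (\ref{2}).

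The only delicate point is the bound for family (c): one must make sure that the factor $|\s(P)|^2$ does not grow faster than the square of $|\E_n|$, and it is precisely for this book-keeping that the disjointness coming from (\ref{1}) and the growth coming from (\ref{2}) combine cleanly. Everything else is routine counting.
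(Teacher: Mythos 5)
Your proposal is correct and takes essentially the same route as the paper: the same key observation that a permuting pair of nontrivial $p$-subgroups generates a $p$-subgroup, so the trivial-intersection hypothesis forces both into one Sylow $p$-subgroup, followed by the same partition of pairs according to membership in $\E_n$ and the same comparison of $\left|\Syl_p(\G_n)\right|\cdot\left|\s(P)\right|^2$ against $\left|\E_n\right|^2$ to invoke hypotheses (ii) and (iii). The only differences are bookkeeping details (your explicit handling of the trivial subgroup and the constant $4/\left|\Syl_p(\G_n)\right|$ in place of the paper's $1/\left|\Syl_p(\G_n)\right|$), which do not affect the argument.
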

\begin{proof}
Define the map $f:\s(\G_n) \times \s(\G_n) \to \left\{0,1\right\}$ via the rule
\begin{equation*}
\left(H_i,H_j\right) \mapsto
\begin{cases} 	1,& \text{if $H_iH_j=H_jH_i$,}\\ 
			0,& \text{otherwise,}
\end{cases}
\end{equation*}
and observe that $f$ is symmetric in its arguments. Thus
\begin{eqnarray*}
\sum\limits_{H \leq\G_n}{\left|\Per(H)\right|}	&=&		\sum\limits_{ X_i,X_j \in \E_n}f\left(X_i,X_j\right)+
											2\sum\limits_{\substack{X_i \in \E_n\\  Y_j \in \E_n^c}}f\left(X_i,Y_j\right)\\                                                                                                                            
									&+&		\sum\limits_{ Y_i,Y_j \in \E_n^c}f\left(Y_i,Y_j\right) \\
									&\le&        \sum\limits_{ X_i,X_j \in \E_n}f\left(X_i,X_j\right)+
											2\sum\limits_{\substack{X_i \in \E_n\\  Y_j \in \E_n^c}}1+
											\sum\limits_{ Y_i,Y_j \in \E_n^c}1 \\
									&=&		\sum\limits_{ X_i,X_j \in \E_n}f\left(X_i,X_j\right)+
											2\left|\E_n\right|\left|\E_n^c\right|+\left|\E_n^c\right|^2 \\
									&=&		\sum\limits_{ X_i,X_j \in \E_n}f\left(X_i,X_j\right)+
											\left|\s(\G_n)\right|^2-\left|\E_n\right|^2.
\end{eqnarray*}
Divide by $\left|\s(\G_n)\right|^2$ both sides to deduce that
\begin{equation}\label{comeback}
\p(\G_n) \leq 1-\frac{\left|\E_n\right|^2}{\left|\s(\G_n)\right|^2}+\frac{\sum\limits_{ X_i,X_j \in \E_n}f\left(X_i,X_j\right)}{\left|\s(\G_n)\right|^2}.
\end{equation}
Now let $X_i,X_j \in \E_n$. We claim that if $X_iX_j$ is a subgroup of 
$\G_n$, then both $X_i,X_j$ belong to the same Sylow $p$-subgroup. 
To see this, let $P \in \Syl_p(\G_n)$. Then there exist elements 
$g_i,g_j$ of $\G_n$ such that $X_i \leq P^{g_i}$, and $X_j \leq P^{g_j}$. Since
\begin{equation*}
\left|X_i X_j \right| = \frac{\left| X_i \right| \left| X_j \right|}{\left|X_i \cap X_j \right|},
\end{equation*}
and because $X_i$, $X_j$ are $p$-groups, so is $X_iX_j$. Hence there exists an 
element $g_k \in \G_n$ such that $X_iX_j \leq P^{g_k}$. Notice that 
$X_i \leq P^{g_i}$ and $X_i \leq X_iX_j \leq P^{g_k}$. Thus 
$P^{g_i} \cap P^{g_k}\geq X_i>1$. Since distinct Sylow $p$-subgroups of $\G_n$ 
intersect trivially, we deduce that $P^{g_i} = P^{g_k}$. Similarly 
$P^{g_j} \cap P^{g_k}\geq X_j>1$, and this forces $P^{g_j} = P^{g_k}$ 
for the same reason. We conclude that $P^{g_i} = P^{g_j}$, thus both 
$X_i$ and $X_j$ are subgroups of the same Sylow $p$-subgroup, as required. 

\noindent Now let $\Syl_p(\G_n) = \left\{P^{g_i} \mid 0 \leq i \leq \left| \Syl_p(\G_n) \right|\right\}$. By dint of 
the above observation we may thus write
\begin{eqnarray*}
\sum\limits_{ X_i,X_j \in \E_n}f\left(X_i,X_j\right)		&=&                \sum\limits_{k=1}^{\left| \Syl_p(\G_n) \right|}\sum\limits_{ X_i,X_j \in P^{g_k}}f\left(X_i,X_j\right)\\
											&\leq&            \sum\limits_{k=1}^{\left| \Syl_p(\G_n) \right|}\sum\limits_{ X_i,X_j \in P^{g_k}}1\\
											&=&                \sum\limits_{k=1}^{\left| \Syl_p(\G_n) \right|}\left(\left|\s\left(P^{g_k}\right)\right|-1\right)^2\\
											&=&                \left| \Syl_p(\G_n) \right| \left( \left|\s ( P ) \right|-1\right)^2.                                                                       
\end{eqnarray*}
On the other hand we have
\begin{equation*}
\left|\E_n\right|^2                =                \left| \Syl_p(\G_n) \right|^2\left( \left| \s( P ) \right|-1 \right)^2.
\end{equation*}                                                        
Hence
\begin{eqnarray*}
0 \leq        \frac{\sum\limits_{ X_i,X_j \in \E_n}f\left(X_i,X_j\right)}{\left|\s(\G_n)\right|^2}        &\leq&	\frac{\sum\limits_{ X_i,X_j \in \E_n}f\left(X_i,X_j\right)}{\left|\E_n\right|^2}\\
                                                                                                                                                &\leq&	\frac{\left| \Syl_p(\G_n) \right|\left( \left| \s( P ) \right|-1 \right)^2}{\left| \Syl_p(\G_n) \right|^2\left( \left| \s( P ) \right|-1 \right)^2}\\
                                                                                                                                                &=&		\frac{1}{\left| \Syl_p(\G_n) \right|},
\end{eqnarray*}
from which we see that
\begin{equation*}
\lim\limits_{n\to +\infty}\frac{\sum\limits_{ X_i,X_j \in \E_n}f\left(X_i,X_j\right)}{\left|\s(\G_n)\right|^2}=0,
\end{equation*}
since $\lim\limits_{n \to +\infty}\left| \Syl_p(\G_n) \right|=+\infty$, thus $\lim\limits_{n \to +\infty}{\left|\Syl_p(\G_n)\right|}^{-1}=0$. Also
\begin{equation*}
\lim\limits_{n\to +\infty}\frac{\left|\E_n\right|^2}{\left|\s(\G_n)\right|^2}=1,
\end{equation*}                                                                
since $\lim\limits_{n\to +\infty}\frac{\left|\E_n\right|}{\left|\s(\G_n)\right|}=1$, by hypothesis.
Taking limits in \eqref{comeback} yields
\begin{equation*}
0 \leq \lim\limits_{n\to +\infty}\p(\G_n) \leq \lim\limits_{n\to +\infty}\left(1-\frac{\left|\E_n\right|^2}{\left|\s(\G_n)\right|^2}+\frac{\sum\limits_{ X_i,X_j \in \E_n}f\left(X_i,X_j\right)}{\left|\s(\G_n)\right|^2}\right)=0,
\end{equation*}                                                                
thus concluding the proof.
\end{proof}

\section{The subgroup structure of $\Sz(q)$}\label{4}
The discussion in this section follows closely that of 
Nouacer~\cite{nouacer}, and Berkovich and Janko~\cite{bj3}, \S 105. Let $\F_q$ be the 
finite field with $q \coloneqq 2^{2n+1}$ elements and 
set $\theta \coloneqq 2^{n+1}$. The map
$\overline{\theta} : x \mapsto x^{\theta}$ is an 
automorphism of the field and, in fact, generates the 
cyclic group $\Gal \left( \F_q/ \F_2 \right)$. This is 
because $\left| \Gal \left( \F_q/ \F_2 \right)\right|=2n+1$ 
and $\overline{\theta}$ acts as a ``square root" of the 
Frobenius automorphism $\phi$, that is, 
$x^{\theta^2} = x^2$ for all $x \in \F_q$, hence both 
$\overline{\theta}$ and $\phi$ have the same order 
in $\Gal \left( \F_q/ \F_2 \right)$.

\begin{definition}[Suzuki group]\label{suz-def} 
Suppose that $a, b, \in \F_q$ and $\lambda \in \F_q^{\times}$.
Define $4 \times 4$ matrices over $\F_q$ by
\begin{align*} 
S(a,b) &\coloneqq 
\begin{pmatrix} 
1 							& 0 							& 0 									& 0 								\\ 
a 							& 1 							& 0 									& 0 								\\ 
b 							& a^{\theta} 					& 1 									& 0 								\\ 
a^{2+\theta} + ab + b^{\theta} 		& a^{1+\theta}+b 				& a 									& 1
\end{pmatrix},\\
C(\lambda) &\coloneqq 
\begin{pmatrix} 
{\lambda}^{1+\frac{\theta}{2}} 		& 0 							& 0 									& 0	 							\\ 
0 							& {\lambda}^{\frac{\theta}{2}} 		& 0 									& 0 								\\ 
0 							& 0 							& {\lambda}^{-\frac{\theta}{2}} 			& 0 								\\ 
0 							& 0 							& 0 									& {\lambda}^{-1-\frac{\theta}{2}}
\end{pmatrix}, 
T \coloneqq 
\begin{pmatrix} 
0 	& 0 	& 0 	& 1 \\ 
0 	& 0 	& 1 	& 0 \\ 
0 	& 1 	& 0 	& 0 \\ 
1 	& 0 	& 0 	& 0 
\end{pmatrix}.
\end{align*}
The Suzuki group $\Sz(q)$ is defined to be the following subgroup of $\GL_4(q)$
\begin{equation*} 
\Sz(q) \coloneqq \left\langle S(a,b), C(\lambda), T \mid a,b \in \F_q, \lambda \in \F_q^{\times} \right\rangle.
\end{equation*}
\end{definition}
In this notation, the set $P \coloneqq \left\{ S(a,b) \mid (a,b) \in \F_q^2 \right\}$
is a Sylow 2-subgroup of $\Sz(q)$. In fact, $P \cong \left(\F_q^2,\ast\right)$, where $\ast$
is defined via the rule \[(a_1,b_1)\ast(a_2,b_2) = (a_1+a_2,b_1+b_2+a_1a_2^{\theta}),\]
the implicit isomorphism being $S(a,b) \mapsto (a,b)$. This writing of $P$ as a  
direct product endowed with a ``twisted" multiplication is particularly convenient, 
as it captures the essential information contained within each matrix while avoiding 
the cumbersome matrix notation. 

\noindent Now notice that $(0,0)$ is the identity element, and $(a,b)^{-1} = (a,b+a^{1+\theta})$, hence
\begin{equation} \label{commutators}
\left[(a_1,b_1) , (a_2,b_2) \right] = \left(0,a_1a_2^{\theta}+a_2a_1^{\theta} \right). 
\end{equation}
If either $a_1=0$ or $a_2=0$, then $\left[(a_1,b_1) , (a_2,b_2) \right]=(0,0)$. Moreover
$(0,b_1) \ast (0,b_2) = (0,b_1+b_2)$ and $(0,b)^2=(0,0)$, thus $\left\{ (0,b) : b \in \F_q \right\} \leq Z$. In fact,
equality occurs here. For suppose that $(a_1,b_1) \in Z$. Then $\left(0,a_1a_2^{\theta}+a_2a_1^{\theta} \right) = (0,0)$
for all $a_2 \in \F_q$, thus $a_1a_2^{\theta}=a_2a_1^{\theta}$, since $\mathrm{char} \F_q = 2$.
Because $n \geq 1$, we may choose $a_2 \in \F_q \setminus \left\{0,a_1\right\}$.
Therefore $\left( a_1a_2^{-1} \right)^{\theta} = a_1a_2^{-1}$, i.e., the element $a_1a_2^{-1}$
is a fixed point of the automorphism $\overline{\theta}$. Since
$\left\langle \overline{\theta} \right\rangle = \Gal \left( \F_q/ \F_2 \right)$, the fixed
points of $\overline{\theta}$ are precisely the elements of the prime subfield $\F_2=\left\{0,1\right\}$.
Hence $a_1a_2^{-1} = 0$, that is $a_1=0$. Thus $Z \leq \left\{ (0,b) : b \in \F_q \right\}$, which
establishes the claim. We deduce that the centre of $P$ is an elementary abelian group, isomorphic
to the additive group of the field. 

From \eqref{commutators} it is clear that $P' \leq Z$, since all commutators are central, hence $P/Z$
is abelian. Moreover $(a,b)^2=(0,a^{1+ \theta}) \in Z$, thus all squares are central as well.
In view of $\left| P/Z \right| = \left| Z \right|$, we infer that $P/Z \cong Z$.

As all squares lie in the centre, clearly $\mho( P) \leq Z$ holds. Consider an arbitrary element $(0,b) \in Z$,
and notice that the map $x \mapsto x^{1+\theta}$ is a bijection of the field $\F_q$, since 
\begin{equation}\label{gcd} 
\gcd \left( q-1,1+\theta \right) = \gcd \left( 2^{2n+1}-1,1+2^{n+1} \right) =1.
\end{equation}
Thus there exists a unique element $a_b \in \F_q$ such that $a_b^{1+\theta} = b$. 
Therefore $(0,b) = (a_b,b)^2 \in \mho( P)$, which proves that $\mho( P) = Z$. Also 
$\Phi( P) = \mho( P)P'$ when $P$ is a $p$-group\footnote{See Rotman~\cite{rotman}, Theorem 5.48.};
since $P' \leq Z$ so $\Phi( P) = Z$.

Proving that $P'$ and $Z$ actually coincide is not difficult. The 
multiplicative group of the field is a subgroup of $\Aut (P )$, and 
acts transitively on the non-identity elements of $Z$, as we shall
shortly see. Since $P'$ is a characteristic subgroup of $P$, it is
invariant under the action via automorphisms of $\F_q^{\times}$.
The claim now follows from $P' \leq Z$, which we already know.
In spite of the simple argument above, we offer an alternative 
proof that is essentially due to Isaacs. It is more direct and, if 
modified appropriately, works equally well in a more general
setting.
\begin{claim}\label{isaacs}
Let $P \in \Syl_2 \left( \Sz \left(2^{2n+1}\right) \right)$, $n \geq 1$. Then $P'=Z$.
\end{claim}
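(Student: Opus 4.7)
The aim is to prove $Z \leq P'$, since the reverse inclusion $P' \leq Z$ is already given by \eqref{commutators}. Under the identification $(0,b) \leftrightarrow b$, I regard $P'$ as an $\F_2$-subspace of $\F_q$; the plan is to pin down this subspace as all of $\F_q$ by producing enough commutators.

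First I fix $y = (y_1, y_2) \in P$ with $y_1 \neq 0$. Because $P$ has class $2$, the map $x \mapsto [x, y]$ is a homomorphism $P \to Z$, and \eqref{commutators} identifies its image with the set $\{a y_1^\theta + y_1 a^\theta : a \in \F_q\}$. The substitution $a = y_1 c$ rewrites this as $y_1^{1+\theta} \cdot L$, where $L \coloneqq \{c + c^\theta : c \in \F_q\}$. Here $L$ is the image of the $\F_2$-linear endomorphism $c \mapsto c + c^\theta$ of $\F_q$; its kernel is the fixed field of $\overline{\theta}$, which coincides with the prime subfield $\F_2$ because $\overline{\theta}$ generates $\Gal(\F_q/\F_2)$. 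Hence $\dim_{\F_2} L = 2n$, and in particular $L \neq \{0\}$ since $n \geq 1$.

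Now $P'$ is the $\F_2$-span of the union of all such images as $y$ ranges over $P$, and so it contains $y_1^{1+\theta} L$ for every $y_1 \in \F_q^\times$. Since $y_1 \mapsto y_1^{1+\theta}$ is a bijection of $\F_q^\times$ by \eqref{gcd}, the span in question contains $\lambda L$ for every $\lambda \in \F_q^\times$, and hence contains the $\F_q$-linear span of $L$ inside $\F_q$. Because $L \neq \{0\}$, that span equals $\F_q$, and we conclude $P' = Z$.

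The only real difficulty in this approach is bookkeeping: one has to keep straight the identifications $P/Z \leftrightarrow \F_q$ and $Z \leftrightarrow \F_q$, and recognise that the factorisation $[P, y] = y_1^{1+\theta} L$ reduces the problem to the one-variable linear-algebra statement that $c \mapsto c + c^\theta$ is a non-zero $\F_2$-endomorphism of $\F_q$, something that is immediate from Galois theory.
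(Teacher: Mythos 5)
Your argument is correct. It does not follow the paper's printed proof of the Claim, which is Isaacs' trace argument: there one first shows that the commutators $[(1,\cdot),(y,\cdot)]$ already give the index-$2$ subgroup of trace-zero elements of $\F_q$ (the image of $y \mapsto y+y^{\theta}$), and then, via the identity $\tr\bigl(xy^{\theta}+x^{\theta}y\bigr)=\tr\bigl(x^{\theta}\bigl(y^{\theta^2}+y\bigr)\bigr)$, exhibits a single commutator of nonzero trace, forcing the generated subgroup to be all of $\F_q$. What you do instead is factor the whole commutator image for fixed $(y_1,y_2)$ with $y_1\neq 0$ as $y_1^{1+\theta}L$ with $L=\{c+c^{\theta}\}$, and then use the bijectivity of $\lambda \mapsto \lambda^{1+\theta}$ from \eqref{gcd} to sweep a fixed nonzero element of $L$ through all of $\F_q^{\times}$; since $P'\leq Z$ is an $\F_2$-subspace, this gives $P'=Z$. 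Your route is essentially a hands-on version of the paper's \emph{other} (sketched) argument, namely that $\F_q^{\times}$ acts transitively on $Z\setminus\{1\}$ and $P'$ is a nontrivial invariant subgroup --- except that you never need to invoke the automorphism action or the fact that $P'$ is characteristic, only the explicit commutator formula \eqref{commutators}, the Galois fact that the fixed field of $\overline{\theta}$ is $\F_2$, and \eqref{gcd}. What Isaacs' proof buys in exchange is flexibility: it identifies exactly what the $x=1$ commutators generate (the trace-zero hyperplane) and, as the paper notes, adapts to more general Suzuki $2$-groups where the clean scaling trick $a=y_1c$ and the coprimality \eqref{gcd} may not be available in the same form; your computation is shorter and more elementary for this specific family.
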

\begin{proof}\textit{(Isaacs)}
It is sufficient to show that the subgroup of the additive
group of $\F_q$ generated by the elements of the form
$x y^{\theta} + x^{\theta} y$ is the whole group.
Taking $x = 1$ and letting $y$ vary over $\F_q$ gives all elements of the form
$y^{\theta} + y$. This set is actually a subgroup since the map
$y \mapsto y^{\theta} + y$ is an additive homomorphism. Furthermore,
the kernel of this homomorphism is the prime subfield $\F_2$, 
and thus by taking $x = 1$, we get a subgroup of $\F_q$ of index $2$. 
In fact, every member of this subgroup has trace
zero, where the trace of an element $t \in \F_q$ is understood to be
$\tr\left(t\right) = \sum_{\sigma \in \langle \overline{\theta} \rangle} t^\sigma$.
It is known that the trace map maps $\F_q$ onto the prime subfield, so the 
kernel of the trace is a subgroup of index $2$. Thus taking $x = 1$ yields 
exactly the elements with trace zero.

It suffices now to find $x$ and $y$ such that $x y^{\theta} + x^{\theta} y$
does not have trace zero. It will follow that the group generated
by the elements of the form $x y^{\theta} + x^{\theta} y$ is the whole
of $\F_q$.
Now in general, $\tr(t) = \tr ( t^{\theta} )$, so
$ \tr\big(x y^\theta + x^\theta y\big) =
\tr\big(x y^\theta\big) + \tr\big(x^\theta y\big) =
\tr\big(x^\theta y^{{\theta}^2}\big) + \tr\big(x^\theta y\big) =
\tr\big(x^\theta\big(y^{{\theta}^2} + y\big)\big)$.
Since $q \geq 8$, ${\theta}^2$ is not the identity automorphism, so
choose $y$ so that $y^{{\theta}^2} + y \neq 0$, and write $c$ to
denote this nonzero element. It suffices now to find $x$ such that
$\tr\left(c x^\theta\right) \neq 0$. 
As $x$ varies over $\F_q$, the element  $c x^{\theta}$ runs over all of $\F_q$,
so for some value of $x$ we get an element with nonzero trace.  
This completes the proof.
\end{proof}

Notice that a subgroup $H \leq P$ 
either contained in $Z$, or containing $Z$ is normal in~$P$. The first
assertion is clear, while the second assertion follows from 
$h^g = [g,h]h$ being an element of $H$ for all $g \in P$, as
$[g,h] \in P' = Z$. We collect what we have established so far.

\textit{The group $P$ is a special 2-group of exponent 4 and class 2, 
with the property that $P/Z \cong Z$.}

\begin{remark}\label{3.2}
The Sylow 2-subgroups of $\Sz(q)$ arise as special cases in Higman's more general 
theory of so-called Suzuki 2-groups\footnote{See Higman~\cite{higman} for 
the original paper that introduces them (the groups $P$ appear 
as $A_2(n, \theta)$ therein), or Huppert and Blackburn~\cite{hb2}, Chapter VIII, \S 7 for 
a definitive account.}, i.e., nonabelian 2-groups with more than one 
involution, admitting a cyclic group of automorphisms which permutes 
their involutions transitively. The purpose of the first joint condition is 
to avoid considering known (and well understood) families of groups, 
such as elementary abelian, cyclic or generalised quaternion, which also 
have cyclic groups of automorphisms acting transitively on their 
involutions (in the elementary abelian case these are known as Singer cycles).
\end{remark}
Let us now consider the group $C \coloneqq \left\{ C(\lambda) : \lambda \in \F_q \right\}$.
This is a cyclic group, generated by $C(\lambda^{\ast})$, where $\lambda^{\ast}$
is any primitive element of $\F_q$. It is clearly isomorphic to the multiplicative group of the field,
where $\lambda \mapsto C(\lambda)$ establishes the said isomorphism, and acts via conjugation
on the Sylow 2-subgroup $P$. Since
\[
\lambda \cdot \left(a,b\right) = \left(a,b\right)^{\lambda} = \left( \lambda a , \lambda^{1+ \theta} b \right),
\]
and in view of \eqref{gcd}, the action of $C$ on the nonidentity elements 
of both $Z$ and $P/Z$ is regular. In fact, the action on $P$ is via automorphisms since
\begin{align*}
\lambda \cdot \left(a_1,b_1\right)\left(a_2,b_2\right) 	&= 	\lambda \cdot \left(a_1+a_2 , b_1+b_2+a_1a_2^{\theta}\right) 								\\
											&= 	\left(\lambda a_1 + \lambda a_2 ,\lambda^{1+ \theta} b_1 + \lambda^{1+ \theta} b_2 + \lambda a_1\left(\lambda a_2\right)^	{\theta}\right)																		\\
											&= 	\left( \lambda a_1 , \lambda^{1+ \theta} b_1 \right)\left( \lambda a_2 , \lambda^{1+ \theta} b_2 \right)	\\
											&=	\left( \lambda \cdot \left(a_1,b_1\right) \right)\left( \lambda \cdot \left(a_2,b_2\right) \right).
\end{align*}
The group $P \rtimes C$ is a Frobenius group with Frobenius kernel $P$ and Frobenius 
complement $C$. It is the normaliser of $P$ and is maximal in $\Sz(q)$. The maximal 
subgroups of $\Sz(q)$ are (up to conjugacy)\footnote{See Wilson~\cite{wilson}, \S 4.2.3., 
or the original source~\cite{suzuki62}, \S 15.}
\begin{enumerate}
\item the normaliser $P \rtimes C$ of a Sylow 2-subgroup $P$,
\item $Sz(q_0)$, where $q = q_0^r$, $r$ is prime, and $q_0 > 2$,
\item $D_{2(q-1)}$,
\item $C_{q+\theta +1} \rtimes C_4$,
\item $C_{q-\theta +1} \rtimes C_4$.
\end{enumerate}

\section{Conjugacy classes of complements and 1-cohomology}\label{5}
In this section we shall discuss an application of Hulpke's method for finding the 
conjugacy classes of subgroups of a soluble group to a Sylow 2-subgroup $P$ 
of $\Sz(q)$. The reader is referred to Hulpke~\cite{hulpke} for a detailed exposition of said 
method, and in particular section 3, Lemma~3.1. 

Consider a subgroup $H$ of $P$ and observe that $H \cap Z$ is central in $P$, 
thus normal in all subgroups of $P$ that contain it. Since $Z \lhd P$, the group 
$HZ$ is defined and is normal in $P$ from the discussion preceding Remark~\ref{3.2}, 
thus both quotient groups $Z \big/ H \cap Z$, $HZ \big/ H \cap Z$ are defined as well. 
In fact, $Z \big/ H \cap Z$ is a subgroup of $HZ \big/ H \cap Z$, and 
\begin{equation*}
HZ \big/H \cap Z \Big/ Z \big/H \cap Z \cong HZ\big/Z \cong H\big/H \cap Z.
\end{equation*}
Since $Z\big/ H \cap Z$ and $H \big/ H \cap Z$ intersect trivially, we see that $H \big/ H \cap Z$ is a
complement to $Z \big/ H \cap Z$ in $HZ \big/ H \cap Z$. Now let $H_1$, $H_2$ be a pair of subgroups 
of $P$. We observe the following.
\begin{lemma}\label{remark}
The subgroup $H_1$ is conjugate to $H_2$ if and only if $H_1 \big/ H_1 \cap Z$ is conjugate to $H_2 \big/ H_2 \cap Z$.
\end{lemma}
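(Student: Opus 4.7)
The plan is to reduce the lemma to two standard facts: that $Z$ lies in the centre of $P$, so $H\cap Z$ is a conjugation invariant of $H$, and that the correspondence theorem transports conjugacy between $P$ and the quotient $P/W$ for any normal $W\leq P$. Because $Z\leq Z(P)$, a $P$-conjugacy between $H_1$ and $H_2$ automatically forces $H_1\cap Z=H_2\cap Z$, so I would read the right-hand side of the lemma as conjugacy in $P/W$, where $W$ is this common intersection.

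For the forward direction, I would take $g\in P$ with $H_2=gH_1g^{-1}$ and use centrality of $Z$ to obtain
\[
H_2\cap Z \;=\; gH_1g^{-1}\cap Z \;=\; g(H_1\cap Z)g^{-1}\;=\;H_1\cap Z \;=:\; W,
\]
the last equality holding because $H_1\cap Z\leq Z\leq Z(P)$. Setting $\bar g:=gW\in P/W$, conjugation by $\bar g$ carries $H_1/W$ onto $H_2/W$, so the quotients are conjugate in $P/W$.

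For the reverse direction, suppose $H_1\cap Z=H_2\cap Z=W$ and that $\bar g\,(H_1/W)\,\bar g^{-1}=H_2/W$ for some $\bar g=gW$ with $g\in P$. Since $W$ is central and contained in $H_1$, it is also contained in $gH_1g^{-1}$, so the subgroups $gH_1g^{-1}$ and $H_2$ both contain $W$. The identity
\[
(gH_1g^{-1})/W \;=\; \bar g\,(H_1/W)\,\bar g^{-1} \;=\; H_2/W
\]
together with the correspondence theorem applied to $P/W$ then yields $gH_1g^{-1}=H_2$, i.e. $H_1$ is conjugate to $H_2$ in $P$.

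The only subtlety, and hardly a genuine obstacle, is the interpretive one: the statement tacitly requires $H_1\cap Z=H_2\cap Z$ in order for the right-hand side to be well posed (both sides of the putative conjugacy must sit in the same ambient quotient). Once this is acknowledged, both implications are immediate from centrality and the lattice correspondence. This clean reduction is exactly what is needed to feed $P$ into Hulpke's method in the next subsection, where $H/(H\cap Z)$ will be treated as a complement to $Z/(H\cap Z)$ inside $HZ/(H\cap Z)$.
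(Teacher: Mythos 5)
Your proof is correct and follows essentially the same route as the paper's: the forward direction via $H_2\cap Z=(H_1\cap Z)^g$ and passage to the quotient, the converse via identifying $\left(H_1/H_1\cap Z\right)^{\overline{g}}$ with $H_1^g/(H_1\cap Z)^g$ and invoking the correspondence between subgroups of $P/W$ and subgroups of $P$ containing $W$. Your explicit remark that centrality of $Z$ forces $H_1\cap Z=H_2\cap Z$, so that both quotients live in the same ambient group $P/W$, is merely a tidier statement of what the paper leaves implicit.
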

\begin{proof}
Suppose first that $H_2=H_1^g$ for some $g \in P$. Then 
$H_2 \cap Z = H_1^g \cap Z = H_1^g \cap Z^g = \left( H_1 \cap Z\right)^g$, 
thus 
$H_2 \big/ H_2 \cap Z = H_1^g  \big/ \left( H_1 \cap Z \right)^g = \left( H_1/H_1 \cap Z \right)^{\overline{g}}$.
Conversely, assume that $H_2 \big/ H_2 \cap Z=\left( H_1 \big/ H_1 \cap Z \right)^{\overline{g}}$ 
for some $\overline{g} \in \overline{P}$. Since 
$\left( H_1 \big/ H_1 \cap Z \right)^{\overline{g}} = H_1^g \big/ \left( H_1 \cap Z \right)^g$, 
we deduce that $H_2=H_1^g$.
\end{proof}

Let us now consider a set of representatives for the conjugacy classes of subgroups of $P$ that contain $Z$, say $\K$,
and a set of representatives for the conjugacy classes of subgroups of $Z$, say $\H$.
Evidently $\H$ is just the set of subgroups of $Z$, while the members of $\K$ are the full preimages of $\s\left( P/Z \right)$.
\begin{lemma}
Let $\K,\H$ be as above. For each $K \in \K$, $H \in \H$ denote by $\U_{K,H}$ the full preimages of a set of representatives for the 
$P$-classes of complements to $Z/H$ in $K/H$. Then 
\begin{equation}
\C = \bigcup\limits_{K \in \K} \bigcup\limits_{H \in \H}\U_{K,H} \label{doubleunion}
\end{equation}
is a set of representatives for the $P$-classes of subgroups of $P$.
\end{lemma}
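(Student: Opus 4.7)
The plan is to verify two claims: every subgroup $L \leq P$ is $P$-conjugate to some element of $\C$, and no two distinct elements of $\C$ are $P$-conjugate. The decisive observation I would exploit throughout is that $Z$ is central in $P$, so conjugation by any $g \in P$ acts trivially on $Z$; consequently the invariants $L \cap Z$ and $LZ$ of a subgroup $L$ are preserved (not merely permuted) under $P$-conjugacy.

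For the covering step, given $L \leq P$, I would set $H := L \cap Z$, which lies in $\H$ since $\H = \s(Z)$, and $M := LZ$, which contains $Z$. Because $\K$ is a set of $P$-class representatives of subgroups of $P$ containing $Z$, there exist $K \in \K$ and $g \in P$ with $M^g = K$. Centrality of $Z$ yields $L^g \cap Z = L \cap Z = H$ and $L^g Z = (LZ)^g = K$, so the image of $L^g$ in $K/H$ is a complement to $Z/H$. By the definition of $\U_{K,H}$ there exists $U \in \U_{K,H}$ with $U/H$ and $L^g/H$ conjugate in $K/H$, and lifting this (in the spirit of Lemma \ref{remark}) produces an element of $P$ conjugating $U$ to $L^g$, hence to $L$.

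For the uniqueness step, I would suppose $U_1 \in \U_{K_1, H_1}$ and $U_2 \in \U_{K_2, H_2}$ are $P$-conjugate, say $U_2 = U_1^g$, and derive $(K_1, H_1, U_1) = (K_2, H_2, U_2)$. Since $U_i$ is the full preimage of a complement to $Z/H_i$ in $K_i/H_i$, we have $U_i \cap Z = H_i$ and $U_i Z = K_i$. Centrality of $Z$ forces $H_2 = (U_1 \cap Z)^g = U_1 \cap Z = H_1$ and $K_2 = (U_1 Z)^g = K_1^g$, and since $K_1, K_2$ both lie in $\K$ they must coincide. Consequently $g \in N_P(K_1)$, and $U_1/H_1$, $U_2/H_1$ are conjugate complements to $Z/H_1$ in $K_1/H_1$, which by the defining property of $\U_{K_1,H_1}$ as a transversal forces $U_1 = U_2$.

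I do not anticipate a substantial obstacle. The only subtlety worth flagging is recognising centrality of $Z$ as the reason $L \cap Z$ and $LZ$ are genuine invariants of the $P$-class of $L$; this is precisely what makes the stratification indexed by pairs $(K,H)$ a bijection onto $P$-classes, with Lemma \ref{remark} handling the final passage between conjugacy in $P$ and conjugacy of the quotient groups.
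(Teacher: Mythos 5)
Your proof is correct and takes essentially the same route as the paper: cover each $L \leq P$ by the pair $(LZ,\, L \cap Z)$ together with the complement $L/(L\cap Z)$ to $Z/(L\cap Z)$ in $LZ/(L\cap Z)$, and get uniqueness from the fact that centrality of $Z$ makes $L\cap Z$ and $LZ$ invariants of the $P$-class, which is precisely the content of the proof of Lemma~\ref{remark}. The only cosmetic difference is that you allow a conjugation carrying $LZ$ onto a member of $\K$, whereas the paper uses that every subgroup containing $Z = P'$ is normal in $P$, so $\K$ is literally the set of all subgroups containing $Z$ and no such conjugation is needed.
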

\begin{proof}
Consider a subgroup $L$ of $P$, and let $K=\langle L,Z \rangle = LZ$, $H=L \cap Z$.
Then $L/H$ is a complement to $Z/H$ in $K/H$, thus $L$ is conjugate to a 
member of $\U_{K,H}$. Conversely, the proof of Lemma \ref{remark} shows that $L$ can be 
conjugate to at most one group from $\C$.
\end{proof}

We note that the above lemma does not tell us for which pairs of subgroups $\left( K,H \right)$ 
the set $\U_{K,H}$ is nonempty; only that, by considering all such pairs, we will end up 
with a complete list for the conjugacy classes of subgroups of $P$. We address this issue in the
following lemma, but we hasten to inform 
the reader that a method which treats the general case is available in Celler et. al.~\cite{celleretal}. 
\begin{lemma}\label{frattini}
Suppose that $Z \leq K \leq P$, and let $H$ be a central subgroup of $P$.
Then $Z/H$ has a complement in $K/H$ if and only if $K/H$ is elementary 
abelian. If such a complement does exist, then $ \left| \Phi(K) \right| \geq \left| K : Z \right|.$
\end{lemma}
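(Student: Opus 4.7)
The plan is to use the structure of $P$ established just before Remark~\ref{3.2}: $Z = Z(P) = P' = \Phi(P)$ is elementary abelian, $P/Z$ is elementary abelian (isomorphic to $Z$), and in particular, since $Z(P) = Z$, the hypothesis that $H$ is central forces $H \leq Z$, so $Z/H \leq K/H$ is a well-defined subgroup.

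For the forward direction, suppose $L/H$ complements $Z/H$ in $K/H$. Because $Z$ is central in $P$, the subgroup $Z/H$ is central in $K/H$, and the complement conditions then identify $K/H$ with the internal direct product $(L/H) \times (Z/H)$. Here $Z/H$ is elementary abelian as a quotient of $Z$, while $L/H \cong (K/H)/(Z/H) \cong K/Z$ is elementary abelian since $K/Z$ embeds in $P/Z$. Hence $K/H$ is elementary abelian. The backward direction is immediate: an elementary abelian $2$-group is an $\F_2$-vector space, and the subspace $Z/H$ admits a vector-space complement, which pulls back to the required subgroup $L \leq K$.

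For the inequality I will show that the squaring map descends to an injection $\phi \colon K/Z \hookrightarrow Z$, which at once yields $|\Phi(K)| \geq |\mho(K)| \geq |\mathrm{Im}\,\phi| = |K:Z|$. Well-definedness is immediate, since any $z \in Z$ satisfies $z^2 = 1$ and is central, so $(xz)^2 = x^2$. For injectivity, writing $x = S(a,b)$ as in Definition~\ref{suz-def} one has $x^2 = (0,a^{1+\theta})$, and the bijectivity of $a \mapsto a^{1+\theta}$ recorded in \eqref{gcd} shows that $\phi(xZ)$ determines $a$ and hence the coset $xZ$. The only subtlety is that $\phi$ is not a group homomorphism---the identity $(xy)^2 = x^2 y^2 [y,x]$ carries a leftover commutator term---but set-theoretic injectivity is all that is required, and this is the entire obstacle of the argument. (Note that the inequality actually holds unconditionally for any $Z \leq K \leq P$; the useful corollary when a complement exists is that $\Phi(K) \leq H$, which combined with $|\Phi(K)| \geq |K:Z|$ forces $|H| \geq |K:Z|$ and so restricts the pairs $(K,H)$ for which $\mathcal U_{K,H}$ can be nonempty.)
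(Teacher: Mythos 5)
Your proposal is correct, and it diverges from the paper's own argument in instructive ways. For the equivalence, the paper works throughout with the characterisation ``$K/H$ elementary abelian $\Leftrightarrow \Phi(K)\leq H$'': given a complement $C/H$ it computes $K=ZC$, $K'=C'$ and $\mho(K)=\mho(C)$, hence $\Phi(K)=\Phi(C)\leq H$; you instead observe that $Z/H$ is central, so $K/H$ is the internal direct product of $L/H\cong K/Z$ with $Z/H$, both elementary abelian, which is a slightly more direct route to the same conclusion (the converse, via vector-space complements, is identical in both). The real difference is in the inequality $|\Phi(K)|\geq |K:Z|$: the paper rewrites it as $|Z|\geq |K:\Phi(K)|$ and argues by Burnside's basis theorem that $K$ needs no more generators than $P$, whereas you exhibit an explicit injection $K/Z\hookrightarrow Z$ induced by squaring, using $(a,b)^2=(0,a^{1+\theta})$ and the bijectivity of $x\mapsto x^{1+\theta}$ from \eqref{gcd}. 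Your argument is specific to the Suzuki $2$-groups but is fully self-contained and in fact proves the stronger statement $|\mho(K)|\geq |K:Z|$; it also sidesteps any appeal to monotonicity of minimal generator numbers under passage to subgroups, which is not a general fact about $p$-groups and is the least transparent step of the paper's proof (here it is harmless precisely because of the bound your squaring map provides). Your closing remarks are also consistent with the paper: the inequality indeed needs no complement hypothesis, and the operative consequence $\Phi(K)\leq H$ is exactly what is used to pass to \eqref{doubleunion2}.
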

\begin{proof}
Recall that $K/H$ is elementary abelian if and only if $\Phi(K) \leq H$,
since the Frattini subgroup of a finite $p$-group is the unique normal subgroup 
of said group minimal with the property that the quotient is elementary abelian.

Now notice that one direction of the first claim follows 
immediately. In an elementary abelian group all subgroups are direct summands, so if $K / H$ 
is elementary abelian, then $Z / H$ is complemented.

Conversely, suppose that $C / H$ is a complement to $Z / H$ in $K / H$. Let us first note that since 
$C / H$ is a complement, 
\begin{equation*}
C / H \cong K / H \Big/ Z / H \cong K/Z.
\end{equation*} 
However, since $K/Z$ is elementary abelian, $C/H$ is elementary abelian as well, 
thus $\Phi( C) \leq H$. Moreover, since 
\begin{equation*}
\left(Z / H\right)\left(C / H\right)=K / H,
\end{equation*} 
we see that $ZC = K$. Therefore $K' = \left(ZC\right)' = Z'C' = C',$ and 
$\mho(K) = \mho \left(ZC\right) = \mho( C)$, since $Z$ is central and elementary abelian.
Hence
\begin{equation*}
\Phi(K) = K' \mho(K) = C' \mho( C) = \Phi( C) \leq H.
\end{equation*}
We deduce that $K / H$ is elementary abelian and this settles the first claim.

In proof of the second claim, we observe that the inequality $ \left| \Phi(K) \right| \geq \left| K : Z \right|$ is equivalent to $\left| Z \right| \geq  \left| K : \Phi(K) \right|$. 
Recall that $Z = \Phi( P)$ and that $P/Z \cong Z$. It is therefore sufficient to establish that 
$ \left|P : \Phi( P) \right| \geq \left| K : \Phi(K) \right|$. However, by Burnside's Basis Theorem, the rank of 
$P / \Phi( P)$ is the size of a minimal generating set for $P$. Evidently the subgroup $K$ 
requires at most as many generators as $P$ does, since any generating set for $P$ generates
all subgroups of $P$ as well. Thus $ \left|P : \Phi( P) \right| \geq \left| K : \Phi(K) \right|$, as required. The proof is now 
complete.
\end{proof}

\noindent In view of the above lemma, equation \eqref{doubleunion} assumes the form
\begin{equation}
\C = \bigcup\limits_{Z \leq K \leq P} \bigcup\limits_{\Phi(K) \leq H \leq Z}\U_{K,H}. \label{doubleunion2}
\end{equation}
We note in passing that the inequality of Lemma \ref{frattini} becomes an equality precisely 
when $K / Z$ is a subfield of $P / Z \cong \F_q$, that is, if and only if
$\log_2 \left| K : Z \right|$ is a divisor of $\log_2 \left| P : Z \right|$.

We shall now briefly recall some basic concepts from the theory of group extensions. We say
that the group $G$ is an extension of $N$ by $F$ if $G$ has a normal subgroup $N$ such that 
$G / N \cong F$. If $G$ is such an extension, with $\phi: F \to G/ N$ realising the 
isomorphism, then a section of $G$ through $F$ is any set $\left\{\tau(f) : f \in F\right\}$ such that $\tau(1)=1$
and $\tau(f)$ is a representative for the coset $\phi(f)$. Assuming that $N$ is abelian, the map 
$F \to \Aut(N)$, $f \mapsto \left(n \mapsto n^{\tau(f)}\right)$ is well defined and independent of $\tau$.
The following
\begin{equation*}
Z^1(F,N) \coloneqq \left\{ \gamma : F \to N \mid \gamma(f_1f_2) = \gamma(f_1)^{\tau(f_2)}\gamma(f_2),\,\ \text{for all} \,\  f_1,f_2 \in F \right\}
\end{equation*}
is known as the group of \textbf{1-Cocycles}, while
\begin{equation*}
B^1(F,N) \coloneqq \left\{ \gamma_n = \left( f \mapsto nn^{-f} \right) : F \to N \mid n \in N \right\}
\end{equation*}
is the group of \textbf{1-Coboundaries}. It is easy to see that $B^1$ is a subgroup of $Z^1$.
Provided the extension $G$ splits over $N$ and $K \leq G$ is a fixed complement, every
complement of $N$ in $G$ can be written as $\left\{ k\gamma(\overline{k}): k \in K \right\}$ for
some $\gamma \in Z^1$, and two complements corresponding to cocycles $\gamma, \delta \in Z^1$
are conjugate in $G$ if and only if $\gamma \delta^{-1}$ lies in $B^1$.

\textit{Thus the factor 
group $H^1=Z^1/B^1$ is in one-to-one correspondence to the conjugacy classes of
complements of $N$ in $G$.}

Note that if $N \leq Z(G)$, then $\gamma_n=\gamma_1$ for all $n \in N$, thus $B^1$ is
the trivial group. Moreover the group of 1-Cocycles reduces to 
\begin{equation*}
Z^1(F,N) = \left\{ \gamma : F \to N \mid \gamma(f_1f_2) = \gamma(f_1)\gamma(f_2),\,\ \text{for all} \,\  f_1,f_2 \in F \right\},
\end{equation*}
which is, by definition, the group of homomorphisms $\Hom(F,N)$. Thus, in the case of
a central subgroup $N$, one has
\begin{equation*}
H^1(F,N) \cong \Hom(F,N).
\end{equation*}
Taking $G=K / H$ and $N=Z / H$ in the above relation, and noting that $F=K/H \Big/ Z/H \cong K/Z$, yields
\begin{align}\label{hom}
\nonumber         H^1\left(K / Z,Z / H\right)	&\cong \Hom\left(K / Z,Z / H\right) \\
                                                                	&\cong \Hom\left(K / Z , Z \big/ \Phi(K) \Big/ H \big/ \Phi(K) \right).
\end{align}
Let us rewrite \eqref{doubleunion2} as
\begin{equation}\label{doubleunion3}
\C = \bigcup\limits_{K/Z \leq P/Z} \bigcup\limits_{ H / \Phi(K) \leq Z / \Phi(K)} \U_{K,H}.
\end{equation}
We notice that the factor groups $K /Z$ and $Z / H$ are elementary abelian, thus both $K /Z$ and $Z / H$ are vector spaces
over $\F_2$. Set $V \coloneqq V(2,n) \cong P/Z$, $X \coloneqq K/Z$, $V(X) \coloneqq Z / \Phi(K)$, 
and $Y \coloneqq H / \Phi(K)$ to obtain yet another expression
\begin{equation}\label{doubleunion4}
\C = \bigcup\limits_{X \subseteq V} \bigcup\limits_{Y \subseteq V(X)} \U_{X,Y},
\end{equation}
where $\U_{X,Y}$ is defined naturally in correspondence to $\U_{K,H}$. In this notation 
\begin{equation}
\Hom\left(K / Z , Z \big/ \Phi(K) \Big/ H \big/ \Phi(K) \right)         =                 \Hom \left( X , V(X) \big/ Y \right)  \cong        \Hom \left( X , Y'  \right),
\end{equation}
where $Y'$ is such that $Y \oplus Y' = V(X)$. Each element of 
$\Hom \left( X , Y'  \right)$ is a linear transformation of vector 
spaces, thus $\Hom \left( X , Y'  \right) \cong \mathcal{L}(X,Y')$. 
Since $\U_{X,Y}$ and $\mathcal{L} \left( X, Y' \right)$ are in 
bijection, equation \eqref{doubleunion4} yields
\begin{equation}\label{C}
\left| \C \right| = \sum\limits_{X \subseteq V} \sum\limits_{\substack{Y \subseteq V(X) \\ V(X) = Y \oplus Y'}} \left| \mathcal{L} \left( X, Y' \right) \right|.
\end{equation}
Of course,
\begin{equation}\label{L}
\dim \mathcal{L}(X,Y') = \dim X \dim Y',
\end{equation}
but it is important to note that the dimension of the $V(X)$-space (which specifies the range of values for the 
dimension of the $Y$-space, thus also for the dimension of the $Y'$-space), does not solely depend on $\dim X$, 
but rather on the $X$-space itself.\footnote{In general, there exist distinct subgroups $Z \leq K_1 , K_2$ of $P$ 
such that $\left| K_1 / Z \right| = \left| K_2 / Z \right|$, but $\left| \Phi(K_1)\right| \neq \left| \Phi(K_2) \right|$.} 

Now consider an element $U$ of $\U_{X,Y}$. Clearly $K=UZ$ normalises $U$, thus $P \geq N_P(U) \geq UZ$.
Since $\left| X \right| = \left| K : Z \right| = \left| UZ : Z \right| = \left| U : U \cap Z \right|$, one has 
\begin{equation}\label{nbounds}
1 \leq \left| P : N_P(U) \right| \leq \frac{\left| Z \right|^2}{\left| UZ \right|} = \frac{\left| Z \right|}{\left|U:U \cap Z\right|} = \frac{\left| Z \right|}{\left| X \right|} = \left|X'\right|,
\end{equation}
where $X'$ is such that $X \oplus X' = V$. Put informally, the size of each conjugacy class of subgroups with given ``$X$-part" is at most the size of the ``$X'$-part". 
Assembling equation \eqref{C} and inequality \eqref{nbounds} yields
\begin{equation}\label{ubound}
\left|\s( P)\right| \leq \sum\limits_{\substack{X \subseteq V \\ V = X \oplus X'}} \sum\limits_{\substack{Y \subseteq V(X) \\ V(X) = Y \oplus Y'}} \left| \mathcal{L} \left( X, Y' \right) \right| \left|X'\right|.
\end{equation}
The proof of the following lemma is now straightforward.
\begin{lemma}\label{boundP}
Let $P \in \Syl_2 \left( \Sz(q) \right)$. The number of subgroups of $P$ satisfies the following inequality
\begin{equation}
\left|\s( P)\right| \leq \sum\limits_{i=0}^{n}{n \brack i}_2 \sum\limits_{j=0}^{n-i} {n-i \brack j}_2 2^{n+i(n-(i+j+1))}. 
\end{equation}
\end{lemma}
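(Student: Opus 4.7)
The plan is to pass from the geometric bound \eqref{ubound} to the purely numerical bound of the lemma by grouping the double sum according to the $\F_2$-dimensions of $X$ and $Y$ and invoking the Gaussian binomial count of subspaces.

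Writing $n \coloneqq \dim_{\F_2} V$, I would first partition the outer sum in \eqref{ubound} by $i \coloneqq \dim X$. There are ${n \brack i}_2$ subspaces of $V$ of this dimension, and for each one may pick a direct complement $X'$ of dimension $n - i$, giving $|X'| = 2^{n - i}$. Over $\F_2$ every subspace is a direct summand, so the constraint $V = X \oplus X'$ in \eqref{ubound} is automatic once $X$ is fixed.

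Next I would control the inner sum via Lemma \ref{frattini}. The inequality $|\Phi(K)| \geq |K : Z|$, combined with $|Z| = |V| = 2^n$ and $|K : Z| = |X| = 2^i$, gives
\begin{equation*}
|V(X)| = |Z/\Phi(K)| \leq |Z|/|K:Z| = 2^{n - i},
\end{equation*}
so $\dim V(X) \leq n - i$, and the inner sum over $Y \subseteq V(X)$ is dominated by the sum over subspaces of an $(n-i)$-dimensional space. Indexing such $Y$ by $j \coloneqq \dim Y$ yields at most ${n-i \brack j}_2$ choices, each with a direct complement $Y'$ of dimension at most $n - i - j$, so by \eqref{L} we obtain $|\mathcal{L}(X, Y')| \leq 2^{i(n - i - j)}$.

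The final step is pure bookkeeping: multiplying $|\mathcal{L}(X, Y')|$ by $|X'|$ yields $2^{i(n-i-j)} \cdot 2^{n - i} = 2^{n + i(n - i - j - 1)}$, exactly the power of $2$ appearing in the claim. Substituting these estimates into \eqref{ubound} produces the asserted inequality. The only nontrivial ingredient is the uniform estimate $\dim V(X) \leq n - i$, since a priori $V(X) = Z/\Phi(K)$ depends on $K$ rather than merely on $\dim X$; once this is in hand, everything reduces to elementary subspace enumeration.
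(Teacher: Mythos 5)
Your proposal is correct and follows essentially the same route as the paper: both start from \eqref{ubound}, use Lemma \ref{frattini} to get $\dim V(X) \leq n-i$, and then enumerate by the dimensions $i=\dim X$, $j=\dim Y$ with Gaussian binomial coefficients and \eqref{L} to arrive at the summand $2^{n+i(n-(i+j+1))}$. The only cosmetic difference is that the paper makes the domination of the inner sum literal by embedding $V(X)$ isomorphically into the complement $X'$ (the space $V^{\ast}(X)$), whereas you invoke monotonicity of the subspace counts ${d \brack j}_2 \leq {n-i \brack j}_2$; both are valid.
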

\begin{proof}
In view of the inequality shown in Lemma \ref{frattini}, one has 
$\left| V(X) \right| \leq \left| Z \right| \left| X \right|^{-1} = \left|X'\right|$. Now let $V^{\ast}(X)$ 
be the subspace of the $X'$-space isomorphic to $V(X)$ under the 
isomorphism carrying $P/Z$ to $Z$. The right-hand-side of inequality 
\eqref{ubound} may thus be rewritten as
\begin{align*}
		       	\sum\limits_{X \subseteq V} 
                        \sum\limits_{Y \subseteq V(X)} 
                        \left| \mathcal{L} \left( X, Y' \right) \right| \left|X'\right|                                 
&= 
                	\sum\limits_{X \subseteq V} 
                	\sum	\limits_{W \subseteq V^{\ast}(X)} 
                	\left| \mathcal{L} \left( X, W' \right) \right| \left|X'\right| \\
&\leq 
                	\sum\limits_{X \subseteq V} 
                	\sum\limits_{\makebox[\sumd]{$\scriptstyle W \subseteq X'$}} 
                	\left| \mathcal{L} \left( X, W' \right) \right|\left|X'\right|, 
\end{align*}
with the understanding that the dash symbol refers to a complementary subspace.
In turn, the right-hand-side of the above inequality is
\begin{equation*} 
\sum\limits_{i=0}^{n}	\sum\limits_{	\substack	{	X \subseteq V	\\	\dim X=i	}} 
\sum\limits_{j=0}^{n-i}	\sum\limits_{	\substack	{	W \subseteq X'	\\	\dim W=j	}}	\left| \mathcal{L} \left( X, W' \right) \right| \left|X'\right|,
\end{equation*}
which, by equation \eqref{L}, is equal to
\begin{equation*}
\sum\limits_{i=0}^{n}{n \brack i}_2 
\sum\limits_{j=0}^{n-i} {n-i \brack j}_2 2^{i(n-i-j)}2^{n-i}                                                                 = 
\sum\limits_{i=0}^{n}{n \brack i}_2 
\sum\limits_{j=0}^{n-i} {n-i \brack j}_2 2^{n+i(n-(i+j+1))}.
\end{equation*} 
The proof is complete.
\end{proof}
\subsection{The subgroups of the normaliser $\Gamma = P \rtimes C$}\label{normaliser}
Recall that the multiplicative group $C = \F_q^{\times}$ of the 
field acts via automorphisms on $P$; in fact, the action of $C$ 
on the nonidentity elements of both $Z$ and $P/Z$ is regular, thus, a fortiori, 
a Frobenius action. 
\begin{lemma}
Let $B \leq C$, and suppose that both $U$ and $U^g$ are $B$-invariant subgroups of $P$, where $g \in P$. Then $g \in N_P(U)$.
\end{lemma}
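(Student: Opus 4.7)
The plan is to translate both invariance hypotheses into the Frobenius group $\Gamma = P \rtimes C$ and then close with a fixed-point count for the action of $B$ on $P$.

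First, for each $b \in B$ the hypothesis $b U^g b^{-1} = U^g$ with $U^g = gUg^{-1}$ rearranges to $(g^{-1}bg)U(g^{-1}bg)^{-1} = U$, so $g^{-1}bg \in N_\Gamma(U)$. Writing the $C$-action on $P$ as $b(x) := bxb^{-1}$, I factor inside the semidirect product: $g^{-1}bg = (g^{-1}b(g)) \cdot b$, with $g^{-1}b(g) \in P$ and $b \in C$. Since $b \in B \subseteq N_\Gamma(U)$ by the first hypothesis, this forces the $P$-component $g^{-1}b(g) \in N_\Gamma(U) \cap P = N_P(U)$; equivalently, $b(g) \in g N_P(U)$ for every $b \in B$. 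Because $B$ also normalises $N_P(U)$ (it normalises $U$), the action of $B$ on $P$ preserves the coset $gN_P(U)$ setwise.

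Now I invoke the Frobenius structure of $\Gamma$: every nontrivial element of $C$, and hence of $B$, acts on $P$ without nonidentity fixed points (tacitly assuming $B \neq 1$; the statement is vacuous otherwise). So $B$ acts on $N_P(U)$ fixing only the identity, giving $|N_P(U)| \equiv 1 \pmod{|B|}$. On the other hand, were $g \notin N_P(U)$, we would have $1 \notin gN_P(U)$, so $B$ would act freely on $gN_P(U)$, producing $|N_P(U)| = |gN_P(U)| \equiv 0 \pmod{|B|}$. Together these congruences force $|B| = 1$, a contradiction; hence $g \in N_P(U)$.

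The only step requiring mild care is the factorisation $g^{-1}bg = (g^{-1}b(g)) \cdot b$ and the bookkeeping of its $P$- and $C$-components in $P \rtimes C$; after that, the conclusion is an immediate orbit count enabled by the Frobenius hypothesis.
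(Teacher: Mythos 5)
Your proof is correct, and it shares the paper's skeleton while finishing along a different, self-contained route. Like the paper, you first establish that $N_P(U)$ is $B$-invariant and that every $b\in B$ sends $g$ into the coset $gN_P(U)$; the paper gets this second point by applying $b$ directly to $U^g$ (giving $U^g=b\left(U^g\right)=U^{b(g)}$), whereas you factor $g^{-1}bg=(g^{-1}b(g))\,b$ inside $\Gamma=P\rtimes C$ and intersect $N_\Gamma(U)$ with $P$ --- a cosmetic difference. The genuine divergence is the last step: the paper cites Isaacs (Corollary 6.2) for the fact that the induced action of $B$ on $P/N_P(U)$ is again Frobenius, so the fixed coset must be the trivial one, while you replace the citation by an explicit orbit count: fixed-point-freeness of nontrivial elements of $B$ on $P$ gives $\left|N_P(U)\right|\equiv 1 \pmod{|B|}$, whereas a $B$-stable coset avoiding the identity would have size $\equiv 0 \pmod{|B|}$, a contradiction. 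This makes the lemma independent of the cited result at the cost of a few lines (it is essentially the standard proof of the fact being cited), and it correctly uses only what the paper has already established, namely that $\Gamma$ is Frobenius with kernel $P$ and complement $C$. One quibble: when $B=1$ the statement is not vacuous --- the invariance hypotheses then hold trivially and the conclusion generally fails --- so nontriviality of $B$ is a genuine implicit hypothesis; the paper's own proof makes the same tacit assumption by choosing a nontrivial $b\in B$, so you are in good company, but phrase it as ``the lemma is only needed for nontrivial $B$'' rather than ``the statement is vacuous otherwise''.
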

\begin{proof}
First note that the $B$-invariance of $U$ implies the $B$-invariance 
of $N_P(U)$. To see why, let $b \in B$, $n \in N_P(U)$. Then 
$U^{b(n)} = b \left( U^{n} \right) = b(U) = U$, where the 
second equality holds because $n$ normalises $U$, and the last 
equality holds because $U$ is $B$-invariant. Therefore 
$b(n) \in N_P(U)$, as claimed. We infer from this that the induced 
action of $B$ on $P \big/ N_P(U) = \overline{P}$ is Frobenius.\footnote{See Isaacs~\cite{isaacs}, Corollary 6.2.} 
Now, suppose that $b$ is a nontrivial element of $B$. Then 
$U^g = b \left( U^g \right) = U^{b(g)}$, thus 
$b^{-1}(g)g \in N_P(U)$. Hence 
$b(\overline{g}) = \overline{g}$, i.e., 
$\overline{g} \in C_{\overline{P}}(b) = \overline{1} = N_P(U)$,
where the first equality holds because $b$ is nontrivial and 
the action Frobenius. The claim follows.
\end{proof}

We deduce that at most one element from each conjugacy class is 
$B$-invariant, thus we may as well consider representatives for the 
conjugacy classes of subgroups of $P$ and ask which of those 
representatives are $B$-invariant. We shall then be able to determine 
all subgroups of $\Gamma$ by observing that $U^{g^{-1}}$ is 
$B$-invariant if and only if $U$ is $B^{g}$-invariant, i.e., the conjugates 
of $U$ are acted upon by the different inverse-conjugates of $B$, where 
$U$ ranges in the set of $B$-invariant subgroups of $P$.

As mentioned previously, the action of $C$ 
on the nonidentity elements of both $Z$ and $P/Z$ is regular, thus 
Dickson's ``multiplier argument"\footnote{See Dickson~\cite{dickson}, \S 70.} 
is in effect. In particular, both $Z$ and $P/Z$ are vector spaces over 
the subfield $\F_{b}$ that $b$ generates, where $\langle b \rangle =B$ 
is any subgroup of $C$, and isomorphic to 
$V_b \coloneqq V \left( 2^{m_b}, \frac{n}{m_b}\right)$, where 
$\left|\F_b\right| = 2^{m_b}$, $m_b \coloneqq \min \left\{r \in \N : o(b) \mid 2^r-1\right\}$. 

With this in mind, let us retain the notation $V_b$ for the space 
$P/Z$ and write $\overline{V_b}$ for the $Z$-space, so that we may 
distinguish between them. Further, for each $X \subseteq V_b$ define 
$V_b(X)$ to be the $\F_b$-space $Z\big/ \Phi(K)$, where $K$ is the full 
preimage of $X$. Let $\U_{X,Y}(\F_b)$ be the full preimages of a set 
of representatives for the $P$-classes of complements to $Z/H$ in $K/H$, 
where $H$ is the full preimage of the subspace $Y \subseteq V_b(X)$. 
Similar considerations to the ones established in the first part of this section 
furnish a proof for the following lemma.
\begin{lemma}\label{boundGamma}
Let $\Gamma$ be the normaliser of a Sylow 2-subgroup $P$ of $\Sz(q)$. Then
\begin{equation}
\left|\s(\Gamma)\right| \leq \sum\limits_{b \mid q-1} \sum\limits_{i=0}^{\frac{n}{m_b}}{\frac{n}{m_b} \brack i}_{2^{m_b}} \sum\limits_{j=0}^{\frac{n}{m_b}-i} {\frac{n}{m_b}-i \brack j}_{2^{m_b}} 2^{n+i(n-m_b(i+j+1))}. 
\end{equation}
\end{lemma}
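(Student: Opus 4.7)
The plan is to imitate the argument of Lemma \ref{boundP} inside $\Gamma = P \rtimes C$, replacing the prime field $\F_2$ by the subfield $\F_b$ generated by an element of order $b$ in $C$. Since $\gcd(|P|,|C|) = 1$, Hall's theorem for the solvable group $\Gamma$ gives that every subgroup $S \leq \Gamma$ decomposes as $S = U \cdot T$, with $U = S \cap P$ and $T$ a Hall $2'$-subgroup of $S$. Because every Hall $2'$-subgroup of $\Gamma$ is $\Gamma$-conjugate to $C$ and $C$ is abelian, one may write $T = pBp^{-1}$ for some $p \in P$ and some $B \leq C$; the preceding lemma then forces $p \in N_P(U)$ and $U$ to be $B$-invariant. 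Hence, up to $P$-conjugation, every subgroup of $\Gamma$ takes the form $V \cdot B$ for some $B \leq C$ and some $B$-invariant $V \leq P$, with an additional multiplicity governed by the cosets of $V$ in $N_P(V)$.

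For fixed $B \leq C$, which corresponds to a unique divisor $b$ of $q-1$ since $C$ is cyclic, I would parametrise the $B$-invariant subgroups of $P$ following the Hulpke scheme of \eqref{doubleunion}--\eqref{ubound}, but now over $\F_b$. The Dickson multiplier argument, valid here because $C$ acts regularly on $Z \setminus \{1\}$ and on $(P/Z) \setminus \{\overline{1}\}$, endows both $Z$ and $P/Z$ with $\F_b$-vector space structures of dimension $n/m_b$, and a subgroup of $P$ is $B$-invariant precisely when its image in $P/Z$ and its intersection with $Z$ are $\F_b$-subspaces. Moreover, by the preceding lemma, each $P$-conjugacy class contains at most one $B$-invariant representative. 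Running the Hulpke parametrisation over $\F_b$ then produces, for every pair $(X, Y)$ with $X \subseteq V_b$ and $Y \subseteq V_b(X)$, exactly $\left|\mathcal{L}_{\F_b}(X, Y')\right|$ classes of $B$-invariant subgroups.

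The multiplicity factor $\left|X'\right|_{\F_b} = 2^{n - m_b i}$ enters via the same reasoning as in \eqref{nbounds}, carried out now over $\F_b$: it bounds the $P$-class size of each $B$-invariant $V$ and simultaneously absorbs the count of $\Gamma$-subgroups $V T$ arising from the $\left|N_P(V):V\right|$ cosets of $V$ in $N_P(V)$. A direct dimension computation with $\dim_{\F_b} X = i$ and $\dim_{\F_b} W = j$ yields
\[
\left|\mathcal{L}_{\F_b}(X, W')\right| \cdot \left|X'\right|_{\F_b} = 2^{i(n - m_b(i + j))} \cdot 2^{n - m_b i} = 2^{n + i(n - m_b(i + j + 1))}.
\]
Converting the subspace counts into the Gaussian binomials ${n/m_b \brack i}_{2^{m_b}}$ and ${(n/m_b) - i \brack j}_{2^{m_b}}$, and summing over divisors $b$ of $q-1$, yields the stated inequality. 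The main obstacle is the bookkeeping around conjugation: one must verify that the single multiplicity factor $\left|X'\right|_{\F_b}$ genuinely absorbs both the $P$-class size of $V$ and the extra freedom from complement twisting in $\Gamma$, and that no subgroup of $\Gamma$ is missed or double-counted as $B$ ranges over the divisors of $q-1$.
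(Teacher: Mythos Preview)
Your approach is precisely the paper's: its proof says only that the argument is identical to that of Lemma~\ref{boundP} with $\F_2$ replaced by $\F_b$, and omits all details; you have supplied those details along the lines the paper intends. One correction: from $T = pBp^{-1}$ normalising $U$, the preceding lemma does \emph{not} force $p \in N_P(U)$ and $U$ to be $B$-invariant---all you know at that point is that $U^{p^{-1}}$ is $B$-invariant, which is exactly what justifies your next sentence (that up to $P$-conjugation every subgroup of $\Gamma$ has the form $V \cdot B$ with $V$ $B$-invariant); the preceding lemma then enters correctly to guarantee that this $V$ is the \emph{unique} $B$-invariant member of its $P$-class, so that $B$-invariant subgroups biject with the relevant $P$-classes. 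Your explicit flagging of the multiplicity bookkeeping around $\left|X'\right|_{\F_b}$ as the main obstacle is appropriate, since the paper does not spell this out either.
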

\begin{proof}
The proof is identical to that of Lemma \ref{boundP}; 
the only difference is that instead of $\F_2$, the 
underlying field now is $\F_b$. The details are thus omitted.
\end{proof}

\noindent Setting $\I( P) \coloneqq \left|\s(\Gamma)\right|-\left|\s( P)\right|$, one has
\begin{align}\label{triple}
\nonumber                 \I( P)                                                		&\leq	\sum\limits_{\substack{b \mid q-1 \\ b>1}}
                                                                                                                      	\sum\limits_{i=0}^{\frac{n}{m_b}}
                                                                                                                        {\frac{n}{m_b} \brack i}_{2^{m_b}} 
                                                                                                                        \sum\limits_{j=0}^{\frac{n}{m_b}-i}
                                                                                                                        {\frac{n}{m_b}-i \brack j}_{2^{m_b}}
                                                                                                                        2^{n+i(n-m_b(i+j+1))}\\
                                                                                                         &=		\sum\limits_{\substack{b \mid q-1 \\ b>1}} 
                                                                                                                        \sum\limits_{i=0}^{\frac{n}{m_b}}
                                                                                                                        \sum\limits_{j=0}^{\frac{n}{m_b}-i} 
                                                                                                                        {\frac{n}{m_b} \brack i}_{2^{m_b}}
                                                                                                                        {\frac{n}{m_b}-i \brack j}_{2^{m_b}} 
                                                                                                                        2^{n+i(n-m_b(i+j+1))}.
\end{align}
Note that the $q$-binomial coefficient $ {m \brack k}_q$ satisfies the elementary double inequality 
\begin{equation} \label{dbound}
q^{k(m-k)} \leq  {m \brack k}_q \leq q^{k(m-k+1)}. 
\end{equation}
To see why that must be, recall that 
\begin{equation*}  
{m \brack k}_q = \frac{ ( q^m-1)(q^{m-1}-1) \dots (q^{m-k+1}-1) } {( q^k-1)(q^{k-1}-1) \dots (q-1) } = \prod\limits_{i=0}^{k-1}\frac{q^{m-i}-1}{q^{k-i}-1}, 
\end{equation*}
and notice that for each factor in the product we have
\begin{equation*} q^{m-k} \leq \frac{q^{m-i}-1}{q^{k-i}-1} \leq q^{m-k+1}. \end{equation*}
Thus
\begin{equation*} q^{k(m-k)} = \prod\limits_{i=0}^{k-1} q^{m-k} \leq {m \brack k}_q \leq \prod\limits_{i=0}^{k-1} q^{m-k+1} = q^{k(m-k+1)}, \end{equation*}
as claimed. In view of the above upper bound, we may thus write inequality \eqref{triple} as 
\begin{align}\label{3times}
\nonumber        \I( P)                          				&\leq        \sum\limits_{\substack{b \mid q-1 \\ b>1}} 
                                                                                                        \sum\limits_{i=0}^{\frac{n}{m_{b}}} 
                                                                                                        \sum\limits_{j=0}^{\frac{n}{m_{b}}-i}
                                                                                                        2^{m_{b} i \left(\frac{n}{m_{b}}-i+1\right) } 
                                                                                                        2^{m_{b} j \left(\frac{n}{m_{b}}-i-j+1\right) }
                                                                                                        2^{ijm_{b}} 
                                                                                                        2^{n-im_{b}}\\
\nonumber                                                                        &=		\sum\limits_{\substack{b \mid q-1 \\ b>1}} 
                                                                                                        \sum\limits_{i=0}^{\frac{n}{m_{b}}} 
                                                                                                        \sum\limits_{j=0}^{\frac{n}{m_{b}}-i}
                                                                                                        2^{ i \left(n-im_{b}+m_{b}\right) } 
                                                                                                        2^{ j \left(n-im_{b}-jm_{b}+m_{b}\right) }
                                                                                                        2^{ijm_{b}} 
                                                                                                        2^{n-im_{b}}\\
                                                                                        &=		\sum\limits_{\substack{b \mid q-1 \\ b>1}} 
                                                                                                        \sum\limits_{i=0}^{\frac{n}{m_{b}}} 
                                                                                                        \sum\limits_{j=0}^{\frac{n}{m_{b}}-i} 
                                                                                                        2^{ f ( i , j , m_{ b } , n ) },
\end{align}
where 
\begin{equation*} f(i,j,m_{ b },n )                                        \coloneqq                        n(i+j+1)-m_{b}\left(i^2+j^2-j\right). \end{equation*}
The summation limits of the innermost double sum as well as the nature of the summand make it clear that the quantity
\begin{equation*}                         
\sum\limits_{i=0}^{\frac{n}{m_{b}}} \sum\limits_{j=0}^{\frac{n}{m_{b}}-i} 2^{ f ( i , j , m_{ b } , n ) },
\end{equation*}
when viewed as a function of $m_{b}$ only, attains its maximum at 
\begin{equation*} 
m_0 \coloneqq \min \left\{ m_{b} : o(b) \mid q-1, b \neq 1 \right\} = \min \left\{ p \in \mathbb{P} : p \mid n \right\}. 
\end{equation*}
Since $n$ is odd, we see that $m_0 \geq 3$. Writing $ n' \coloneqq \lfloor \frac{n}{3}\rfloor$, we obtain
\begin{equation} \label{double} 
\sum\limits_{i=0}^{\frac{n}{m_{b}}} \sum\limits_{j=0}^{\frac{n}{m_{b}}-i} 2^{ f ( i , j , m_{ b } , n ) }		\leq 
\sum\limits_{i=0}^{\frac{n}{m_0}} \sum\limits_{j=0}^{\frac{n}{m_0}-i} 2^{ f ( i , j , m_{ 0 } , n ) }		\leq 
\sum\limits_{i=0}^{n'} \sum\limits_{j=0}^{n'-i} 2^{ f ( i , j , 3 , n ) }. 
\end{equation}
Therefore, inequality \eqref{3times} becomes
\begin{equation}\label{IP}
\I( P) \leq \sum\limits_{\substack{b \mid q-1 \\ b>1}}        \sum\limits_{i=0}^{n'} \sum\limits_{j=0}^{n'-i} 2^{ f ( i , j , 3 , n ) }.
\end{equation}
In the following section we shall obtain an upper bound for the 
right-hand-side of the above inequality and use this to establish 
that $\Gamma$ and $P$ have the same 
number of subgroups asymptotically speaking.
\section{Proof of $\left| \s(\Gamma) \right| \sim \left| \s( P ) \right|$.}\label{6}
Let us fix $n$ temporarily (thus also $n'$), and define
\begin{equation*} 
\R \coloneqq \left\{ (x,y) \in \mathbb{R}^2 \mid 0 \leq x \leq n' , 0 \leq y \leq n'-x \right\} 
\end{equation*}
to be the triangular region of the Cartesian plane lying in the first quadrant 
and below the line $x+y=n'$. Moreover, let 
\begin{equation*} 
\f : \R \to \mathbb{R}, \ \ (x,y) \mapsto n(x+y+1)-3(x^2+y^2-y)
\end{equation*}
be the extension of $f$ over the reals. We shall apply standard 
techniques from calculus in order to find the (absolute) maximum 
of $\f$ in~$\R$. We begin by showing that $\f(x,y)$ has no interior 
critical points. Now,
\begin{eqnarray*}
\frac{\partial \f}{\partial x} &=& n-6x, \ \ \text{and}\\
\frac{\partial \f}{\partial y} &=& n-6y+3.
\end{eqnarray*}
At an interior critical point the partial derivatives vanish. This, in our 
case, is equivalent to
$(x_0,y_0) = \left(\frac{n}{6},\frac{n}{6} + \frac{1}{2} \right)$.
But $x_0+y_0 = \frac{n}{3} + \frac{1}{2} > n'$, which forces said candidate point 
to lie outside~$\R$. Thus $\f(x,y)$ has no interior critical points, as claimed. 

We now check the maximum value of $\f(x,y)$ on the boundary of~$\R$. 
The three cases to consider here correspond to the sides of our triangle 
and are
\begin{eqnarray*}
\f(0,y)                         &=& -3y^2+(n+3)y+n,                                \\ 
\f(x,0)                         &=& -3x^2+nx+n,                                          \\ 
\f(x,n'-x)                         &=& -6x^2+3(2n'-1)x+3n'+nn'+n-3{n'}^2,  
\end{eqnarray*}
where $x,y$ range in $[0,n']$.
In each case the function $\f$ is a quadratic polynomial $\alpha z^2 + \beta z + \gamma$. 
Since $\alpha<0$ in all cases, and because $z_0 \coloneqq -\frac{ \beta}{2 \alpha}$ is an interior 
point of the corresponding line segment, we see that $\f$ peaks at $z_0$. 
Thus the desired maximum of $\f$ is the maximum among
\begin{eqnarray*}
\f\left( 0 , \frac{n+3}{6} \right)                                                                 &=& \frac{1}{12}n^2 + \frac{3}{2}n + \frac{3}{4},                \\
\f\left( \frac{n}{6} , 0 \right)                                                                 &=& \frac{1}{12}n^2 + n,                                                        \\
\f\left(\frac{2n'-1}{4} , \frac{2n'+1}{4} \right)                                         &=& n'\left[n-\frac{3}{2}(n'-1)\right] + n + \frac{3}{8}.                        
\end{eqnarray*}
Using $\frac{n}{3}-1 \leq n' \leq \frac{n}{3}$, one easily sees that
\begin{equation*} 
\frac{n^2}{6} + 2n + \frac{3}{8} \geq \f\left(\frac{2n'-1}{4} , \frac{2n'+1}{4} \right) \geq \frac{n^2}{6} + n - \frac{9}{8}. 
\end{equation*}
Therefore
\begin{eqnarray*}
\max_{n \geq 9} \left\{ f(i,j,3,n) : (i,j) \in \R \cap \N^2 \right\}                 &\leq&         \max_{n \geq 9} \left\{ \f(x,y) : (x,y) \in \R \right\}        \\                 
                                                                                                        &\leq&        \frac{n^2}{6} + 2n + \frac{3}{8}.
\end{eqnarray*}
We may thus write
\begin{equation*}
\sum\limits_{i=0}^{n'} \sum\limits_{j=0}^{n'-i} 2^{ f ( i , j , 3 , n ) }         \leq        \sum\limits_{i=0}^{n'} \sum\limits_{j=0}^{n'-i} 2^{\frac{n^2}{6} + 2n + \frac{3}{8}}                                
                                                                                                        <        \left( \frac{n}{3}+1 \right)^2 2^{\frac{n^2}{6} + 2n + \frac{3}{8}}.        
\end{equation*}
Substituting this in \eqref{IP}, we obtain
\begin{align}\label{usedlater}
\nonumber	\I( P)         \leq				\sum\limits_{\substack{b \mid q-1 \\ b>1}} 
\nonumber                        			     		\sum\limits_{i=0}^{n'} 
\nonumber			              				\sum\limits_{j=0}^{n'-i} 
\nonumber          							2^{ f ( i , j , 3 , n ) }                                                                                                                
\nonumber                	&\leq                		(d(q-1) -1) \left( \frac{n}{3}+1 \right)^2 2^{\frac{n^2}{6} + 2n + \frac{3}{8}}		\\
\nonumber                	&<                        		2^n n^2 2^{\frac{n^2}{6} + 2n + \frac{3}{8}}								\\
					&<                        		2^{\frac{n^2}{6} + 4n + \frac{1}{2}}.                                                                                                                                                                                                        
\end{align}
This bound is sufficient for our purposes. In order to see why that is, we 
look back at \eqref{dbound}. Take $m=n$, $k=\frac{n-1}{2}$ and $q=2$ 
there. Then
\begin{equation}
2^{\frac{n^2-1}{4}} \leq  {n \brack \frac{n-1}{2}}_2 \leq 2^{\frac{n^2+2n-3}{4}}. 
\end{equation}
Since $Z$ is an elementary abelian 2-group, the quantity 
${n \brack \frac{n-1}{2}}_2$ counts the number of central subgroups 
of order $2^{\frac{n-1}{2}}$ in $P$. Hence
\begin{equation}\label{finalingredient}
2^{\frac{n^2-1}{4}} \leq  {n \brack \frac{n-1}{2}}_2 < \left| \s( P ) \right|,
\end{equation}
which in turn implies that 
\begin{equation*}
0 < \frac{\left| \s(\Gamma) \right| - \left| \s( P ) \right|}{\left| \s( P ) \right|} < 2^{\frac{n^2}{6} + 4n + \frac{3}{8} -\frac{n^2}{4}+\frac{1}{4}}.
\end{equation*}
Thus 
\begin{equation*}
\lim\limits_{n \to +\infty} \frac{\left| \s(\Gamma) \right| - \left| \s( P ) \right|}{\left| \s( P ) \right|}=0;
\end{equation*}
equivalently
\begin{equation}\label{PisequaltoGamma}
\lim\limits_{n \to +\infty} \frac{\left| \s(\Gamma) \right| } { \left| \s( P ) \right|}=1.
\end{equation}
A similar analysis to the one outlined above will reveal that 
\begin{equation}\label{boundforP}
\left| \s( P ) \right| < 2^{\frac{(n+1)^2}{2}}
\end{equation}
for all $n \in \N$, where the maximum of the implied $\overline{f}$ now occurs at an interior point.

\section{Almost all subgroups are $2$-groups}\label{7}
We begin this section with the following lemma, which is a straightforward application of the
Schur-Zassenhaus theorem.
\begin{lemma}\label{product}
Let $G=A \rtimes B$ be a finite group, where $\gcd \left( \left| A \right|,\left| B \right| \right)=1$. If $H \leq G$, then 
$H = \left( H \cap A \right) \rtimes \left( H \cap B^g \right)$ for some $g \in A$.
\end{lemma}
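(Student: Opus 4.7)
The plan is to apply the Schur--Zassenhaus theorem twice: once to split $H$, and once in a subgroup of $G$ to identify the complement obtained with the intersection $H \cap B^g$ for a suitable $g \in A$. The coprimality hypothesis is what makes this possible.

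First I would observe that, because $A$ is normal in $G$, the subgroup $H \cap A$ is normal in $H$. The quotient $H/(H\cap A)$ embeds into $G/A \cong B$, so $\lvert H/(H\cap A)\rvert$ divides $\lvert B\rvert$, while $\lvert H\cap A\rvert$ divides $\lvert A\rvert$. The hypothesis $\gcd(\lvert A\rvert,\lvert B\rvert)=1$ therefore gives $\gcd(\lvert H\cap A\rvert,\lvert H:H\cap A\rvert)=1$. By Schur--Zassenhaus, $H\cap A$ has a complement $K$ in $H$, so $H=(H\cap A)\rtimes K$ and $K\cap A=1$.

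The second step is to arrange that $K$ is actually contained in some conjugate of $B$ by an element of $A$. Put $M \coloneqq AK$, which is a subgroup since $A\trianglelefteq G$. Then $A\trianglelefteq M$ and $K$ is a complement to $A$ in $M$. On the other hand $B\cap M$ is also a complement to $A$ in $M$: every $m\in M$ factors uniquely as $m=ab$ with $a\in A$, $b\in B$, and then $b=a^{-1}m\in AM=M$, so $b\in B\cap M$. Both complements have order coprime to $\lvert A\rvert$, so the conjugacy part of Schur--Zassenhaus (applied inside $M$) yields some $m\in M$ with $K=(B\cap M)^{m}$. Writing $m=ab'$ with $a\in A$ and $b'\in B\cap M$, and using that $B\cap M$ normalises itself, I get
\begin{equation*}
K = (B\cap M)^{ab'} = (B\cap M)^{a} \leq B^{a},
\end{equation*}
with $a\in A$ as required.

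Finally I would check that $K=H\cap B^{a}$. The inclusion $K\subseteq H\cap B^{a}$ is clear. For the reverse, note that $B^{a}\cap A=(B\cap A)^{a}=1$ because $A$ is normal, so $(H\cap B^{a})\cap(H\cap A)=1$; thus $H\cap B^{a}$ injects into $H/(H\cap A)$, showing $\lvert H\cap B^{a}\rvert$ divides $\lvert B\rvert$. Since $\lvert H\cap B^{a}\rvert$ also divides $\lvert H\rvert=\lvert H\cap A\rvert\cdot\lvert K\rvert$ and $\gcd(\lvert H\cap A\rvert,\lvert B\rvert)=1$, it must divide $\lvert K\rvert$, forcing $H\cap B^{a}=K$. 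Therefore $H=(H\cap A)\rtimes(H\cap B^{a})$, as claimed. The only delicate point is the Frattini-style argument in the second step guaranteeing that the conjugator lies in $A$, rather than in an arbitrary element of $G$; everything else is a direct application of Schur--Zassenhaus and order-counting.
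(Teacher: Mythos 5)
Your proof is correct and follows essentially the same route as the paper: split $H$ over $H\cap A$ by Schur--Zassenhaus, then push the complement into an $A$-conjugate of $B$ by factoring the conjugating element so that the $B$-part can be dropped. The only minor differences are that you apply the conjugacy part of Schur--Zassenhaus inside $M=AK$ (where the paper quotes the form that a subgroup of order coprime to $\left|A\right|$ lies in some conjugate of $B$) and that you identify $K=H\cap B^{a}$ by an order-divisibility count rather than by the paper's containment sandwich $H^{a}\leq\left(H^{a}\cap A\right)\left(H^{a}\cap B\right)\leq H^{a}$.
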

\begin{proof}
Observe that $H \cap A$ is a normal subgroup of $H$, and that 
$\gcd \left( \left| H \cap A \right|, \left| H \big/ H \cap A \right| \right)=1$, since
$H \big/ H \cap A$ is isomorphic to a subgroup of $B$. By the Schur-Zassenhaus 
theorem, $H \cap A$ has a complement in $H$, say $C$, thus $H = \left( H \cap A \right) C$.
Quoting the same theorem there exists a $g \in G$ such that $C^g \leq B$. Now write $g=ba$
for some $b \in B$, $a \in A$. Then $C^a \leq B$, hence
$H^a = \left( H \cap A \right)^aC^a \leq \left( H^a \cap A \right) \left( H^a \cap B \right) \leq H^a.$
We conclude that $H = \left( H \cap A \right) \left( H \cap B^{g} \right)$ for $g=a^{-1}$.
\end{proof}

\begin{corollary}
Suppose that $G$ is a finite group satisfying the conditions of Lemma \ref{product}. Then 
$\left| \s(G) \right| \leq \left| A \right| \left| \s(A) \right| \left| \s(B) \right|$.
\end{corollary}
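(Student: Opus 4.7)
The plan is to extract from Lemma \ref{product} a surjection from a product set of known cardinality onto $\s(G)$, which immediately yields the desired bound.

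First I observe that Lemma \ref{product} furnishes, for each subgroup $H \leq G$, at least one element $g \in A$ together with the pair of subgroups $H \cap A \leq A$ and $H \cap B^{g} \leq B^{g}$, such that
\[
H = (H \cap A) \rtimes (H \cap B^{g}).
\]
Setting $A_1 \coloneqq H \cap A$ and $B_1 \coloneqq g^{-1}(H \cap B^{g})g$, we have $A_1 \in \s(A)$, $B_1 \in \s(B)$, and
\[
H = A_1 \cdot (g B_1 g^{-1}) = \langle A_1,\, g B_1 g^{-1} \rangle.
\]

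Next I define the assignment
\[
\Psi \colon \s(A) \times A \times \s(B) \longrightarrow \s(G), \qquad (A_1,g,B_1) \longmapsto \langle A_1,\, g B_1 g^{-1} \rangle,
\]
where the codomain entry is interpreted as an arbitrary fixed element of $\s(G)$ whenever $\langle A_1, g B_1 g^{-1}\rangle$ happens not to equal $A_1(gB_1g^{-1})$ as a group-theoretic product (this convention is harmless since we only need an upper bound). The discussion above shows that every $H \in \s(G)$ lies in the image of $\Psi$: indeed, $H$ itself is the image of some admissible triple $(H \cap A,\, g,\, g^{-1}(H \cap B^{g})g)$ provided by Lemma \ref{product}. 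Hence $\Psi$ is surjective, and therefore
\[
|\s(G)| \leq |\s(A) \times A \times \s(B)| = |A|\,|\s(A)|\,|\s(B)|,
\]
as required.

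There is no real obstacle here; the only point that needs a moment's care is that the representation $(A_1,g,B_1)$ attached to a given $H$ is in general far from unique, which is in our favour because it only strengthens the surjectivity of $\Psi$ and thus the inequality. The result is therefore a direct bookkeeping consequence of Lemma \ref{product}, with no further input required.
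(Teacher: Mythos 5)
Your proof is correct and is essentially the paper's argument in dual form: the paper defines an injection $\s(G) \to A \times \s(A) \times \s(B)$, $H \mapsto \bigl(g, H \cap A, H^{g^{-1}} \cap B\bigr)$, using the decomposition from Lemma \ref{product}, while you run the same bookkeeping in the opposite direction as a surjection from the triple set onto $\s(G)$. Both yield the bound $\left| \s(G) \right| \leq \left| A \right| \left| \s(A) \right| \left| \s(B) \right|$ by exactly the same counting of triples, so no further comment is needed.
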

\begin{proof}
Consider the map $f: \s(G) \to A \times \s(A) \times \s(B)$, defined 
via the rule $H \mapsto \left(g,H \cap A, H^{g^{-1}}\cap B\right)$,
where $g$ is such that $H = \left( H \cap A \right) \rtimes \left( H \cap B^g \right)$,
and observe that $f$ is injective.
\end{proof}

We apply the above corollary, along with the elementary inequality $d(k) \leq 2 \sqrt k$, to the groups $D_{2(q-1)}$, $C_{q-\theta+1} \rtimes C_4$, and $C_{q+\theta+1} \rtimes C_4$:
\begin{enumerate}
\item $\left| \s \left( D_{2(q-1)} \right) \right| \leq 2(q-1) d(q-1) \leq 4 q^{\frac{3}{2}}$,
\item $\left| \s \left( C_{q-\theta+1} \rtimes C_4 \right) \right| \leq 3(q-\theta+1) d (q-\theta+1) \leq 6q^{\frac{3}{2}}$,
\item $\left| \s \left( C_{q+\theta+1} \rtimes C_4 \right) \right| \leq 3(q+\theta+1) d (q+\theta+1) \leq 6 \cdot 2^{\frac{3}{2}}q^{\frac{3}{2}} < 17q^{\frac{3}{2}}$.
\end{enumerate}
Assuming that $n \geq 9$, we see that $\left| \s \left( H \right) \right| < q^2$ when 
$H$ is any of the groups in the above list. In fact this inequality holds for all 
$n \in \N$ by a direct calculation. We shall also require the following lemma.
\begin{lemma}\label{induction}
The number of subgroups of $\Sz(q)$ satisfies the following inequality 
\[
\left| \s \left( \Sz(q)\right) \right| < 2^{\frac{11}{5}(\log_2 q)^2},
\]
for all $q$ an odd power of 2.
\end{lemma}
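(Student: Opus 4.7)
The plan is to proceed by strong induction on $n$, where $q = 2^{2n+1}$. Since $\Sz(q)$ is simple, every proper subgroup lies inside some maximal subgroup, and the number of conjugates of a non-normal maximal $M$ is $[\Sz(q):M]$. Hence
\begin{equation*}
\left| \s(\Sz(q)) \right| \leq 1 + \sum_{M} [\Sz(q):M] \cdot \left| \s(M) \right|,
\end{equation*}
where $M$ ranges over representatives of the five conjugacy classes of maximal subgroups listed in Section~\ref{4}. This bound overcounts but an upper bound is all we need.

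I would handle the five classes separately. For the Borel-type maximal $\Gamma = P \rtimes C$: its index is $q^{2}+1$, and combining $\left| \s(P) \right| < 2^{(n+1)^{2}/2}$ from \eqref{boundforP} with $\I(P) < 2^{n^{2}/6 + 4n + 1/2}$ from \eqref{usedlater} yields $\left|\s(\Gamma)\right| < 2 \cdot 2^{(n+1)^{2}/2}$ once $n \geq 9$ (the gap $(n+1)^{2}/2 - n^{2}/6 - 4n - 1/2 = (n^{2}-9n)/3$ is non-negative there). The contribution of the Borel class is therefore at most $2^{(n^{2}+10n+O(1))/2}$. For each of the three remaining Frobenius-style classes $D_{2(q-1)}$ and $C_{q\pm\theta+1}\rtimes C_{4}$, the corollary to Lemma~\ref{product} together with the explicit estimates in the list preceding the present lemma gives $\left| \s(M) \right| < q^{2}$, and $[\Sz(q):M] < q^{4}$, so each such class contributes at most $q^{6} = 2^{6(2n+1)}$.

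The substantive ingredient is the Suzuki-subgroup class: $\Sz(q_{0})$ with $q = q_{0}^{r}$, $r$ an odd prime dividing $2n+1$, and $q_{0} = 2^{(2n+1)/r} \geq 8$. The inductive hypothesis gives
\begin{equation*}
\left|\s(\Sz(q_{0}))\right| < 2^{(11/5)((2n+1)/r)^{2}} \leq 2^{11(2n+1)^{2}/45},
\end{equation*}
since $r \geq 3$; with $[\Sz(q):\Sz(q_{0})] < q^{4}$ and the number of such classes bounded by $\omega(2n+1)$, the total Suzuki-contribution is at most $\omega(2n+1) \cdot 2^{4(2n+1)+11(2n+1)^{2}/45}$.

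Summing the three groups of contributions and passing to $\log_{2}$, the Suzuki term dominates asymptotically and falls short of the target exponent $\frac{11(2n+1)^{2}}{5}$ by $\frac{88(2n+1)^{2}}{45} - O(n)$, a quadratic gap that easily absorbs all overhead for $n$ beyond a small threshold. The main obstacle is precisely the calibration of the constant $\frac{11}{5}$: it must be large enough that $\frac{11}{5r^{2}} + (\text{lower-order in }n) < \frac{11}{5}$ holds for every admissible $r \geq 3$ (the extremal case $r = 3$ driving the choice), and small enough that the resulting bound remains useful later. The residual issue is handling the finitely many small $n$ (where the Borel estimate above may fail, and where for $n \leq 3$ no proper Suzuki maximals even exist): these cases are dispatched either by the trivial bound $|\s(\Sz(q))| \leq |\s(\Gamma)| \cdot (q^{2}+1) + 3q^{6} + 1$ combined with \eqref{boundforP}, or by direct computation.
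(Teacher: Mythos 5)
Your overall strategy is the same as the paper's: induct on the exponent of $q$, bound $\left|\s(\Sz(q))\right|$ by summing $[\Sz(q):M]\,\left|\s(M)\right|$ over the five classes of maximal subgroups, control the Borel class via \eqref{boundforP} and \eqref{usedlater}, the dihedral and metacyclic classes via the corollary to Lemma \ref{product}, and the subfield classes via the induction hypothesis together with $r\geq 3$, which is what produces the decisive coefficient $\tfrac{11}{45}$ against the target $\tfrac{11}{5}$. That quadratic-gap reasoning is sound, but two specific steps fail as written. First, the claim $[\Sz(q):\Sz(q_0)]<q^{4}$ is false once $r\geq 5$: for $q=2^{15}$, $q_0=8$ the index is $2^{30}(2^{30}+1)(2^{15}-1)/29120\approx 1.3\cdot 10^{18}$, which exceeds $q^{4}=2^{60}\approx 1.15\cdot 10^{18}$. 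The correct and sufficient bound is $[\Sz(q):\Sz(q_0)]<q^{5}$, as in the paper; this only shifts a linear term in the exponent, so your quadratic slack absorbs it, but the inequality you state is not true.

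Second, the treatment of small $n$ does not hold up. Your simplification $\left|\s(\Gamma)\right|<2\cdot 2^{(n+1)^2/2}$ requires $n\geq 9$, and the fallback ``trivial bound'' $\left|\s(\Sz(q))\right|\leq (q^{2}+1)\left|\s(\Gamma)\right|+3q^{6}+1$ omits the subfield maximal subgroups, which do occur in the excluded range: at $n=4$ one has $\Sz(8)$ maximal in $\Sz(2^{9})$, and at $n=7$ both $\Sz(8)$ and $\Sz(32)$ are maximal in $\Sz(2^{15})$; moreover ``direct computation'' of the subgroup lattices of $\Sz(2^{9})$ or $\Sz(2^{15})$ is not realistic. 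The repair is simply to avoid the $n\geq 9$ simplification altogether: keep $\left|\s(\Gamma)\right|\leq 2^{(n+1)^2/2}+2^{n^{2}/6+4n+1/2}$ (or bound the sum crudely by the product of the two summands, as the paper does) and retain the subfield term in the recursion for every $n\geq 2$, so that the only case needing computation is the base case $\Sz(8)$. This is exactly how the paper closes the argument, the final check being that $\tfrac{11}{5}m^{2}>\tfrac{2}{3}m^{2}+7m+\tfrac{3}{2}+\log_2 3$ for all $m\geq 5$, where $m=\log_2 q$.
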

\begin{proof}
The proof is by induction on the exponent of $q$. To establish the base case, 
we use a computer algebra programme to compute the size of 
the subgroup lattice of $\Sz(8)$, and find that 
$\left| \s \left( \Sz(8) \right) \right| = 17295 < 2^{15} <2^{\frac{99}{5}}$.
Now set $m \coloneqq \log_2 q$, and let $\left\{ p_1,\dots,p_k \right\}$
be the set of distinct prime divisors of $m$. Since each subgroup of $\Sz(q)$ is contained in 
one of its maximal subgroups, we see that
\begin{align*}
\left| \s \left( \Sz(q)\right) \right| 	&< (q^2+1)\left| \s(\Gamma)\right|+\frac{1}{2}q^2(q^2+1)\left| \s\left( D_{2(q-1)} \right)\right|\\
									&+\frac{1}{4}q^2(q-1)(q+\theta+1)\left| \s\left( C_{q-\theta+1} \rtimes C_4 \right)\right|\\
									&+\frac{1}{4}q^2(q-1)(q-\theta+1)\left| \s\left( C_{q+\theta+1} \rtimes C_4 \right)\right|\\
									&+\sum\limits_{i=1}^{k}\left|\Sz(q) : \Sz\left(q^{1/p_i}\right)  \right| \left|\s \left( \Sz\left(q^{1/p_i}\right) \right) \right|.
\end{align*}
Observe that $\left|\Sz(q) : \Sz\left(q^{1/p_i}\right)  \right| < q^5$, and recall that $\left| \s \left( H \right) \right| < q^2$ when $H$ is either the dihedral group, or one of the two metacyclic Frobenius groups. Hence
\begin{align}\label{almostthere}
\nonumber \left| \s \left( \Sz(q)\right) \right| 			&< 	(q^2+1)\left| \s(\Gamma)\right|+q^2\left[\frac{1}{2}q^2(q^2+1)+\frac{1}{4}q^2(q-1)(q \pm \theta+1)\right]\\
\nonumber									&	+q^5\sum\limits_{i=1}^{k}\left|\s \left( \Sz\left(q^{1/p_i}\right) \right) \right|\\
											&=	(q^2+1)\left| \s(\Gamma)\right|+q^6+q^5\sum\limits_{i=1}^{k}\left|\s \left( \Sz\left(q^{1/p_i}\right) \right) \right|.
\end{align}
The induction hypothesis yields
\begin{align*}
\left| \s \left( \Sz(q)\right) \right| 					&< 	(q^2+1)\left| \s(\Gamma)\right|+q^6+q^5\sum\limits_{i=1}^{k}2^{\frac{11}{5}(m/3)^2}\\
											&= 	(q^2+1)\left| \s(\Gamma)\right|+q^6+q^5\omega(m)2^{\frac{11}{45}m^2}.
\end{align*}
Recall that $\left| \s(\Gamma)\right|=\left| \s( P)\right|+\I( P ) < 2^{\frac{(m+1)^2}{2}} + 2^{\frac{m^2}{6} + 4m + \frac{1}{2}}$ 
by \eqref{boundforP} and \eqref{usedlater} respectively, hence 
\begin{align*}
\left| \s \left( \Sz(q)\right) \right| &< 	(2^{2m}+1)\left( 2^{\frac{(m+1)^2}{2}} + 2^{\frac{m^2}{6} + 4m + \frac{1}{2}} \right)+2^{6m}+2^{\frac{11}{45}m^2+5m+\log_2 \omega(m)}\\
								&< 	2^{2m+\frac{1}{2}}2^{\frac{(m+1)^2}{2}+\frac{m^2}{6} + 4m + \frac{1}{2}}+2^{6m}+2^{\frac{11}{45}m^2+5m+\log_2 \omega(m)}\\
								&= 	2^{\frac{2}{3}m^2+7m + \frac{3}{2}}+2^{6m}+2^{\frac{11}{45}m^2+5m+\log_2 \omega(m)}.
\end{align*}
But $\max \left\{2^{\frac{2}{3}m^2+7m + \frac{3}{2}}, 2^{6m},2^{\frac{11}{45}m^2+5m+\log_2 \omega(m)}  \right\} = 2^{\frac{2}{3}m^2+7m + \frac{3}{2}}$ for all $m \in \N$, thus
\[
\left| \s \left( \Sz(q)\right) \right| < 2^{\frac{2}{3}m^2+7m + \frac{3}{2}+\log_2 3} < 2^{\frac{11}{5}m^2},
\]
since $\frac{11}{5}m^2 > \frac{2}{3}m^2+7m + \frac{3}{2}+\log_2 3$ for all $m \geq 5$. The induction is now complete.
\end{proof}

\noindent The constant 11/5 which appears at the exponent of the upper 
bound for $\left| \s \left( \Sz(q)\right) \right|$ in Lemma \ref{induction} is 
by no means the best possible, but it is sufficient for our purposes. To see why, we look 
back at \eqref{almostthere} which, in view of Lemma \ref{induction}, yields
\begin{align}
\frac{\left| \s \left( \Sz(q)\right) \right|}{(q^2+1)\left| \s(\Gamma)\right|} 	&<	1 +	\frac	{q^6+q^5\sum\limits_{i=1}^{k}\left|\s \left( \Sz\left(q^{1/p_i}\right) \right) \right|}
																				{(q^2+1)\left| \s(\Gamma)\right|}\\
																&<	1 + 	\frac	{2^{6 \log_2 q}+2^{\frac{11}{5}(\log_2 q /3)^2 + 5 \log_2 q + \log_2 \omega(\log_2 q)}}
																				{(q^2+1)\left| \s(\Gamma)\right|}\\
																&<	1 + 	\frac	{2^{\frac{11}{45}(\log_2 q)^2 + 5 \log_2 q + \log_2 \omega(\log_2 q)+1}}
																				{(q^2+1)\left| \s(\Gamma)\right|}.
\end{align}
We recall that $\left| \s(\Gamma)\right| > \left| \s( P )\right| > 2^{\frac{(\log_2 q)^2}{4}-\frac{1}{4}}$ by inequality \eqref{finalingredient}, thus 
\[
(q^2+1)\left| \s(\Gamma)\right| > q^2\left| \s( P )\right| > 2^{\frac{(\log_2 q)^2}{4} + 2 \log_2 q - \frac{1}{4}}. 
\]
In conclusion
\begin{align*}
\frac{\left| \s \left( \Sz(q)\right) \right|}{(q^2+1)\left| \s(\Gamma)\right|} 		&< 1 +  2^{\frac{11}{45}(\log_2 q)^2 + 5 \log_2 q + \log_2 \omega(\log_2 q)+1-\left( \frac{(\log_2 q)^2}{4} + 2 \log_2 q - \frac{1}{4} \right)}\\
																	&= 1 + 2^{-\frac{1}{180}(\log_2 q)^2 + 3 \log_2 q + \log_2 \omega(\log_2 q)+\frac{5}{4}},
\end{align*}
hence 
\[
\lim\limits_{n \to + \infty} \frac{\left| \s \left( \Sz(q)\right) \right|}{(q^2+1)\left| \s(\Gamma)\right|} = 1.
\]
Since $\lim\limits_{n \to + \infty} \frac{\left| \s(\Gamma)\right|}{\left| \s( P )\right|} = 1$ by \eqref{PisequaltoGamma}, so $\lim\limits_{n \to + \infty} \frac{(q^2+1)\left| \s(\Gamma)\right|}{\left| \E_n \right|} = 1$. Therefore
\[
\lim\limits_{n \to + \infty} \frac{\left| \s \left( \Sz(q)\right) \right|}{\left| \E_n \right|} = \lim\limits_{n \to + \infty} \frac{\left| \s \left( \Sz(q)\right) \right|}{(q^2+1)\left| \s(\Gamma)\right|}\cdot \lim\limits_{n \to + \infty} \frac{(q^2+1)\left| \s(\Gamma)\right|}{\left| \E_n \right|} =1 \cdot 1 = 1.
\]
\section{Conclusions and further research}\label{8}
We have seen that $\p \left( \Sz \left( 2^{2n+1} \right)\right)$ vanishes asymptotically; 
at the same time our intuition guides us to believe that all simple groups should have low 
subgroup permutability degrees.
We make this precise in the form of a conjecture.
\begin{conjecture}
Let $G$ be a finite simple classical (or alternating) group. Then the probability that two subgroups of $G$ permute tends to 0 as~$\left| G \right| \to \infty$.
\end{conjecture}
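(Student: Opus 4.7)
The plan is to generalize the strategy that succeeded for $\Sz(q)$: locate a natural family of ``dominant'' subgroups whose subgroup lattices collectively account for nearly all of $\s(G)$, and show that within this family permuting pairs form a vanishing fraction of all pairs. For a simple group $G$ of Lie type in characteristic $p$ the first candidate is a Sylow $p$-subgroup $U$, i.e.\ the unipotent radical of a Borel. The immediate difficulty is that Lemma~\ref{TI} applies only in Lie rank~$1$, since Sylow $p$-subgroups of higher-rank groups are not TI. First I would therefore complete the rank-$1$ picture by adapting the present machinery to $\PSL(2,q)$, the Ree groups $\mathrm{R}(q)$, and the unitary groups $\mathrm{U}_3(q)$, using the known Frobenius structure of the Borel normaliser and the explicit lists of maximal subgroups in each case. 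In all three families, a Sylow $p$-subgroup is still TI, so Lemma~\ref{TI} applies once a Hulpke/cohomology count analogous to section~\ref{5} is carried out for the relevant unipotent radical.

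For higher Lie rank and for the alternating groups $A_n$, Lemma~\ref{TI} must be replaced by a broader criterion of the following shape: if $\{H_\alpha\}$ is a family of subgroups of $G$ such that $|\bigcup_\alpha \s(H_\alpha)|/|\s(G)| \to 1$, and the number of pairs $(X,Y)$ with $X,Y \in \s(H_\alpha)$ for some common $\alpha$ and $XY=YX$ is $o(|\s(G)|^2)$, then $\p(G)\to 0$. For $G$ classical the natural choice is to let the $H_\alpha$ be the conjugates of a fixed Borel subgroup $B$, or of a minimal parabolic; Aschbacher's classification would then be invoked to show that subgroups not contained in any $H_\alpha$ are negligible. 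The internal permutability count would exploit the Levi decomposition $H_\alpha = L_\alpha U_\alpha$ and a Hulpke-style parametrisation of $\s(H_\alpha)$, refining what section~\ref{5} does for the rank-$1$ unipotent radical.

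The main obstacle is sharp control on $|\s(G)|$ and on $|\s(H_\alpha)|$ for arbitrary classical $G$: the cohomological bookkeeping that powered section~\ref{5} becomes combinatorially explosive over general root data, and the available upper bounds on $|\s(G)|$ due to Pyber and others grow like $2^{c(\log_2|G|)^2}$ with constants too crude for the ratio arguments that appear in inequality~\eqref{comeback}. For $A_n$ a parallel obstacle appears: there is no uniform notion of ``parabolic'' and one must use partition (Young) stabilisers in place of $H_\alpha$, with the O'Nan--Scott theorem controlling the primitive subgroups that slip through. I expect the alternating case to need a genuinely different argument, perhaps leveraging the known asymptotic count of $2$-subgroups of $S_n$ and showing that this class already overwhelms every other contribution to $\s(A_n)$.
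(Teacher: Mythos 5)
The statement you are addressing is presented in the paper as a \emph{conjecture}: the paper offers no proof of it, citing only the Suzuki theorem and the author's earlier work on $\PSL_2$ in even characteristic as partial evidence. So there is no proof of the paper's to compare yours against, and what you have written is in any case a research programme rather than a proof: each of its stages is either deferred (the rank-one analogues, the Aschbacher and O'Nan--Scott reductions, the $A_n$ case, for which you yourself say a genuinely different argument is needed) or rests on an unproved new criterion.

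That criterion is the concrete gap, and as stated it is not merely unproved but insufficient. You propose: if $\bigl|\bigcup_\alpha \s(H_\alpha)\bigr|/\left|\s(G)\right| \to 1$ and the number of permuting pairs $(X,Y)$ lying in a \emph{common} $H_\alpha$ is $o\!\left(\left|\s(G)\right|^2\right)$, then $\p(G)\to 0$. This omits exactly the pairs that Lemma~\ref{TI} works hardest to control: pairs that permute but lie in no common member of the family. In the paper's argument those are eliminated structurally --- $X$ and $Y$ are $p$-groups, so $XY=YX$ forces $XY$ to be a $p$-group, hence contained in a single Sylow $p$-subgroup, and the TI hypothesis then forces that Sylow to contain both $X$ and $Y$; moreover TI-ness gives $\left|\E_n\right| = \left|\Syl_p(\G_n)\right|\left(\left|\s(P)\right|-1\right)$ exactly, which is what converts the pair count into the $1/\left|\Syl_p(\G_n)\right|$ bound. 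With conjugates of a Borel or a parabolic in higher rank, neither mechanism survives: a permuting pair need not generate a $p$-group, the product can lie in a parabolic or equal $G$, and the heavy overlaps among the sets $\s(H_\alpha)$ destroy the denominator comparison. So before any Aschbacher-style dominance argument you would need a corrected analogue of Lemma~\ref{TI} --- either an extra hypothesis bounding permuting pairs across distinct $H_\alpha$, or a proof that $XY$ always lies in some member of the family --- together with lower bounds on $\left|\s(G)\right|$ sharp enough to beat the $2^{c(\log_2\left|G\right|)^2}$-type upper bounds you mention, and a dominance statement (``almost all subgroups lie in the chosen family'') which in higher rank is not known and is the analogue of the hardest part of the paper (section~\ref{7}). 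As it stands, your proposal maps out plausible lines of attack but leaves the conjecture exactly as open as the paper does.
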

\noindent In particular, this conjecture strengthens Problem 4.3. in T{\u{a}}rn{\u{a}}uceanu's paper~\cite{tuarnuauceanu11}, while the present paper
and the author's recent work~\cite{aivazidis} provide a partial solution. A weaker version of the above conjecture provides an interesting non-simplicity criterion, and
stems from the empirical observation that high subgroup permutability degree forces normality.
\begin{conjecture}
Let $G$ be a finite group. If $\p(G) > \p(A_5)$, then $G$ is not simple.
\end{conjecture}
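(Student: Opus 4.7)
The plan is to prove the contrapositive: every finite non-abelian simple group $G$ satisfies $\p(G) \leq \p(A_5)$. (The restriction to non-abelian simple groups is essential, since the abelian simple groups $C_p$ trivially have $\p = 1$; one must read the statement as implicitly excluding this degenerate case.) The first step is to pin down the benchmark value by direct enumeration. The alternating group $A_5$ has exactly $59$ subgroups, distributed across nine orders in a well-understood pattern, so the symmetric permutability table on these subgroups can be computed by hand or with a computer algebra system, producing a concrete rational value $\p(A_5)$ against which every other finite simple group must be compared.

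The second step is to invoke the Classification of Finite Simple Groups and proceed family by family. For the sporadic groups, a direct machine computation of $\p(G)$ would settle the inequality individually. For the infinite families --- alternating $A_n$ with $n \geq 6$, classical $\PSL_n(q)$, $\mathrm{PSU}_n(q)$, $\mathrm{PSp}_{2n}(q)$, $\mathrm{P}\Omega^{\pm}_n(q)$, and the exceptional groups of Lie type --- one would generalise the method of the present paper: identify a distinguished prime $p$ (the defining characteristic in the Lie-type case, the largest prime at most $n$ in the alternating case), verify or substitute the trivial-intersection hypothesis on the Sylow $p$-subgroups, and apply Lemma \ref{TI} to conclude $\p(G) \to 0$. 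A finite check for the simple groups of order below some computable threshold, combined with these asymptotic results, would then close the gap between the asymptotic and uniform regimes.

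The main obstacle is twofold and lies squarely in the Lie-type regime. First, the trivial-intersection property for Sylow $p$-subgroups in defining characteristic genuinely holds only for rank-one groups --- $\PSL_2(q)$, $\mathrm{PSU}_3(q)$, $\Sz(q)$, and ${}^{2}G_2(q)$ --- whereas in higher rank two Sylow $p$-subgroups typically share a nontrivial unipotent subgroup, so the key reduction to counting within a single Sylow subgroup that underlies Lemma \ref{TI} simply fails. A substitute criterion must be devised, perhaps by partitioning the lattice according to Borel or parabolic overgroups instead of Sylow $p$-subgroups. Second, even granted a domination statement for $p$-subgroups, the sharp inequality $\p(G) \leq \p(A_5)$ demands \emph{uniform} rather than merely asymptotic estimates, which requires sharper combinatorial control on $\left|\s(P)\right|$ than the crude bounds of Section \ref{5} afford. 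For these reasons I expect the conjecture to resist any direct extension of the Suzuki-group argument, and to require either a genuinely new structural input or substantial case-by-case computation at the low-rank and small-$n$ end of the classification.
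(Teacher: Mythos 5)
This statement is one of the paper's \emph{conjectures}: the author offers no proof, and it is presented precisely as an open problem motivated by the empirical observation that high subgroup permutability degree forces normality. So there is no ``paper's own proof'' to compare against, and your proposal should be judged as a research plan rather than as a proof --- and by that standard you have, quite honestly, not proved anything: every step beyond the computation of $\p(A_5)$ is conditional on machinery that does not yet exist. You correctly flag the two genuine obstructions yourself. First, Lemma \ref{TI} needs the Sylow $p$-subgroups to intersect trivially, which in defining characteristic holds essentially only for the rank-one families ($\PSL_2(q)$, $\mathrm{PSU}_3(q)$, $\Sz(q)$, ${}^2G_2(q)$), so the reduction to counting inside a single Sylow subgroup is unavailable for the bulk of the classification. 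Second, and more fundamentally, everything the paper (and the author's companion work on $\PSL_2$ in even characteristic) delivers is \emph{asymptotic}: $\p(G)\to 0$ along a family says nothing about the uniform inequality $\p(G)\leq\p(A_5)$ for every individual simple group, and no effective bounds are given anywhere in the paper that would let you convert the limit statements into explicit thresholds plus a finite check. Your observation that the conjecture must implicitly exclude the abelian simple groups $C_p$ (for which $\p=1$) is a fair reading, consistent with the first conjecture's restriction to nonabelian simple groups.

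In short: your assessment of the difficulty is accurate and your proposed strategy (CFSG, family-by-family, a distinguished prime, a TI-type or parabolic-based domination lemma, machine computation for sporadics and small cases) is the natural one, but it is a programme, not a proof, and the conjecture remains open both in the paper and after your proposal. If you want to extract something provable from your outline, the realistic targets are the asymptotic statements for further rank-one families, where trivial intersection of Sylow subgroups in defining characteristic actually holds and the paper's criterion applies with only technical changes.
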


Let us now focus on what structural information for $G$ can be deduced from knowledge of $\p(G)$. As 
explained in the introduction, a finite group $G$ satisfies $\p(G) = 1$ if and only if $G$ is quasi-Dedekind;
equivalently, if and only if $G$ is nilpotent modular. We can ask what happens if either of the two conditions
is dropped. 

Nilpotency of a finite group alone cannot be related to its subgroup 
permutability degree in any meaningful way. Consider the families of groups
$\left\{C_{2^{n-3}} \times Q_8\right\}_{n=4}^{+\infty}$ and $\left\{ D_{2^n} \right\}_{n=4}^{+\infty}$, where $Q_8$ 
is the quaternion group of order 8, and $ C_{2^{n-3}} $, $D_{2^n}$ are the cyclic group and dihedral group of order
$2^{n-3}$, $2^n$ respectively. In both cases the 
groups are nilpotent, non-modular for all $n \in \N_{\geq 4}$, but 
\begin{equation*}
\lim\limits_{n\to\infty}\p\left(C_{2^{n-3}} \times Q_8\right)=1 \neq 0 = \lim\limits_{n\to\infty}\p\left(D_{2^n}\right).
\end{equation*}
Indeed, in this case the groups lie at the opposite extremes of the range of values of $\p$, asymptotically speaking. 

The modular, non-nilpotent case admits a similar answer. Denote by $r_n \coloneqq p_1 p_2 \dots p_n$ the product of the first $n$
primes, and consider the families 
\begin{equation}
\left\{ C_{r_n/r_2} \times S_3 \right\}_{n=2}^{+\infty}, \ \ \text{and} \ \ \left\{ C_{r_n/2p_n} \times D_{2p_n} \right\}_{n=2}^{+\infty},
\end{equation}
where $S_3$ denotes the symmetric group on 3 letters.
Both families consist of groups that are modular and non-nilpotent, but
\begin{equation*}
\lim\limits_{n\to\infty} \p \left( C_{r_n/r_2} \times S_3 \right) = \frac{5}{6} \neq 0 = \lim\limits_{n\to\infty} \p \left( C_{r_n/2p_n} \times D_{2p_n} \right).
\end{equation*}
\noindent Finally, it seems worthwhile to have a clearer picture of the range of values that $\p$ assumes.
\begin{question}
Which rational numbers are limit points for $\p$? Do irrational limit points exist?
\end{question}
\noindent\textbf{Acknowledgements.} The author thanks I. M. Isaacs for permission to reproduce the argument that proves Claim \ref{isaacs}.

\end{document}